\numberwithin{equation}{section}
\numberwithin{figure}{section}
\DeclareMathSymbol{:}{\mathpunct}{operators}{"3A}
\theoremstyle{plain}
\newtheorem{thm}{\protect\theoremname}[section]
\theoremstyle{definition}
\newtheorem{defn}[thm]{\protect\definitionname}
\newtheorem{example}[thm]{\protect\examplename}
\theoremstyle{remark}
\newtheorem{rem}[thm]{\protect\remarkname}
\theoremstyle{plain}
\newtheorem{cor}[thm]{\protect\corollaryname}
\theoremstyle{definition}
\newtheorem{notation}[thm]{\protect\notationname}
\theoremstyle{plain}
\newtheorem{lem}[thm]{\protect\lemmaname}
\newtheorem{prop}[thm]{\protect\propositionname}
\theoremstyle{definition}
\newtheorem{warning}[thm]{\protect\warningname}
\providecommand{\corollaryname}{Corollary}
\providecommand{\definitionname}{Definition}
\providecommand{\examplename}{Example}
\providecommand{\lemmaname}{Lemma}
\providecommand{\notationname}{Notation}
\providecommand{\propositionname}{Proposition}
\providecommand{\remarkname}{Remark}
\providecommand{\theoremname}{Theorem}
\providecommand{\warningname}{Warning}
\begin{document}
\global\long\def\sf#1{\mathsf{#1}}%

\global\long\def\scr#1{\mathscr{{#1}}}%

\global\long\def\cal#1{\mathcal{#1}}%

\global\long\def\bb#1{\mathbb{#1}}%

\global\long\def\bf#1{\mathbf{#1}}%

\global\long\def\frak#1{\mathfrak{#1}}%

\global\long\def\fr#1{\mathfrak{#1}}%

\global\long\def\u#1{\underline{#1}}%

\global\long\def\tild#1{\widetilde{#1}}%

\global\long\def\mrm#1{\mathrm{#1}}%

\global\long\def\pr#1{\left(#1\right)}%

\global\long\def\abs#1{\left|#1\right|}%

\global\long\def\inp#1{\left\langle #1\right\rangle }%

\global\long\def\br#1{\left\{  #1\right\}  }%

\global\long\def\norm#1{\left\Vert #1\right\Vert }%

\global\long\def\hat#1{\widehat{#1}}%

\global\long\def\opn#1{\operatorname{#1}}%

\global\long\def\bigmid{\,\middle|\,}%

\global\long\def\Top{\sf{Top}}%

\global\long\def\Set{\sf{Set}}%

\global\long\def\SS{\sf{sSet}}%

\global\long\def\Kan{\sf{Kan}}%

\global\long\def\Cat{\mathcal{C}\sf{at}}%

\global\long\def\Grpd{\mathcal{G}\sf{rpd}}%

\global\long\def\Res{\mathcal{R}\sf{es}}%

\global\long\def\imfld{\cal M\mathsf{fld}}%

\global\long\def\ids{\cal D\sf{isk}}%

\global\long\def\ich{\cal C\sf h}%

\global\long\def\SW{\mathcal{SW}}%

\global\long\def\SHC{\mathcal{SHC}}%

\global\long\def\Fib{\mathcal{F}\mathsf{ib}}%

\global\long\def\Bund{\mathcal{B}\mathsf{und}}%

\global\long\def\Fam{\cal F\sf{amOp}}%

\global\long\def\B{\sf B}%

\global\long\def\Spaces{\sf{Spaces}}%

\global\long\def\Mod{\sf{Mod}}%

\global\long\def\Nec{\sf{Nec}}%

\global\long\def\Fin{\sf{Fin}}%

\global\long\def\Ch{\sf{Ch}}%

\global\long\def\Ab{\sf{Ab}}%

\global\long\def\SA{\sf{sAb}}%

\global\long\def\P{\mathsf{POp}}%

\global\long\def\Op{\mathcal{O}\mathsf{p}}%

\global\long\def\Opg{\mathcal{O}\mathsf{p}^{\mathrm{gn}}_{\infty}}%

\global\long\def\Tup{\mathsf{Tup}}%

\global\long\def\H{\cal H}%

\global\long\def\Mfld{\cal M\mathsf{fld}}%

\global\long\def\D{\cal D\mathsf{isk}}%

\global\long\def\Acc{\mathcal{A}\mathsf{cc}}%

\global\long\def\Pr{\mathcal{P}\mathrm{\mathsf{r}}}%

\global\long\def\Del{\mathbf{\Delta}}%

\global\long\def\id{\operatorname{id}}%

\global\long\def\Aut{\operatorname{Aut}}%

\global\long\def\End{\operatorname{End}}%

\global\long\def\Hom{\operatorname{Hom}}%

\global\long\def\Ext{\operatorname{Ext}}%

\global\long\def\sk{\operatorname{sk}}%

\global\long\def\ihom{\underline{\operatorname{Hom}}}%

\global\long\def\N{\mathrm{N}}%

\global\long\def\-{\text{-}}%

\global\long\def\op{\mathrm{op}}%

\global\long\def\To{\Rightarrow}%

\global\long\def\rr{\rightrightarrows}%

\global\long\def\rl{\rightleftarrows}%

\global\long\def\mono{\rightarrowtail}%

\global\long\def\epi{\twoheadrightarrow}%

\global\long\def\comma{\downarrow}%

\global\long\def\ot{\leftarrow}%

\global\long\def\corr{\leftrightsquigarrow}%

\global\long\def\lim{\operatorname{lim}}%

\global\long\def\colim{\operatorname{colim}}%

\global\long\def\holim{\operatorname{holim}}%

\global\long\def\hocolim{\operatorname{hocolim}}%

\global\long\def\Ran{\operatorname{Ran}}%

\global\long\def\Lan{\operatorname{Lan}}%

\global\long\def\Sk{\operatorname{Sk}}%

\global\long\def\Sd{\operatorname{Sd}}%

\global\long\def\Ex{\operatorname{Ex}}%

\global\long\def\Cosk{\operatorname{Cosk}}%

\global\long\def\Sing{\operatorname{Sing}}%

\global\long\def\Sp{\operatorname{Sp}}%

\global\long\def\Spc{\operatorname{Spc}}%

\global\long\def\Fun{\operatorname{Fun}}%

\global\long\def\map{\operatorname{map}}%

\global\long\def\diag{\operatorname{diag}}%

\global\long\def\Gap{\operatorname{Gap}}%

\global\long\def\cc{\operatorname{cc}}%

\global\long\def\ob{\operatorname{ob}}%

\global\long\def\Map{\operatorname{Map}}%

\global\long\def\Rfib{\operatorname{RFib}}%

\global\long\def\Lfib{\operatorname{LFib}}%

\global\long\def\Tw{\operatorname{Tw}}%

\global\long\def\Equiv{\operatorname{Equiv}}%

\global\long\def\Arr{\operatorname{Arr}}%

\global\long\def\Cyl{\operatorname{Cyl}}%

\global\long\def\Path{\operatorname{Path}}%

\global\long\def\Alg{\operatorname{Alg}}%

\global\long\def\ho{\operatorname{ho}}%

\global\long\def\Comm{\operatorname{Comm}}%

\global\long\def\Triv{\operatorname{Triv}}%

\global\long\def\triv{\operatorname{triv}}%

\global\long\def\Env{\operatorname{Env}}%

\global\long\def\Act{\operatorname{Act}}%

\global\long\def\loc{\operatorname{loc}}%

\global\long\def\Assem{\operatorname{Assem}}%

\global\long\def\Nat{\operatorname{Nat}}%

\global\long\def\Conf{\operatorname{Conf}}%

\global\long\def\Rect{\operatorname{Rect}}%

\global\long\def\Emb{\operatorname{Emb}}%

\global\long\def\Homeo{\operatorname{Homeo}}%

\global\long\def\mor{\operatorname{mor}}%

\global\long\def\Germ{\operatorname{Germ}}%

\global\long\def\Post{\operatorname{Post}}%

\global\long\def\Sub{\operatorname{Sub}}%

\global\long\def\Shv{\operatorname{Shv}}%

\global\long\def\Cov{\operatorname{Cov}}%

\global\long\def\Disc{\operatorname{Disc}}%

\global\long\def\Tot{\operatorname{Tot}}%

\global\long\def\Ho{\operatorname{Ho}}%

\global\long\def\ABF{\mathbf{AB4}}%

\global\long\def\Top{\mathrm{Top}}%

\global\long\def\lax{\mathrm{lax}}%

\global\long\def\weq{\mathrm{weq}}%

\global\long\def\fib{\mathrm{fib}}%

\global\long\def\inert{\mathrm{inert}}%

\global\long\def\act{\mathrm{act}}%

\global\long\def\cof{\mathrm{cof}}%

\global\long\def\inj{\mathrm{inj}}%

\global\long\def\proj{\mathrm{proj}}%

\global\long\def\univ{\mathrm{univ}}%

\global\long\def\Ker{\opn{Ker}}%

\global\long\def\Coker{\opn{Coker}}%

\global\long\def\Im{\opn{Im}}%

\global\long\def\Coim{\opn{Coim}}%

\global\long\def\coker{\opn{coker}}%

\global\long\def\im{\opn{\mathrm{im}}}%

\global\long\def\coim{\opn{coim}}%

\global\long\def\gn{\mathrm{gn}}%

\global\long\def\Mon{\opn{Mon}}%

\global\long\def\Un{\opn{Un}}%

\global\long\def\St{\opn{St}}%

\global\long\def\cun{\widetilde{\opn{Un}}}%

\global\long\def\cst{\widetilde{\opn{St}}}%

\global\long\def\Sym{\operatorname{Sym}}%

\global\long\def\CA{\operatorname{CAlg}}%

\global\long\def\Ind{\operatorname{Ind}}%

\global\long\def\rd{\mathrm{rd}}%

\global\long\def\xmono#1#2{\stackrel[#2]{#1}{\rightarrowtail}}%

\global\long\def\xepi#1#2{\stackrel[#2]{#1}{\twoheadrightarrow}}%

\global\long\def\adj{\stackrel[\longleftarrow]{\longrightarrow}{\bot}}%

\global\long\def\btimes{\boxtimes}%

\global\long\def\ps#1#2{\prescript{}{#1}{#2}}%

\global\long\def\ups#1#2{\prescript{#1}{}{#2}}%

\global\long\def\hofib{\mathrm{hofib}}%

\global\long\def\cofib{\mathrm{cofib}}%

\global\long\def\Vee{\bigvee}%

\global\long\def\w{\wedge}%

\global\long\def\t{\otimes}%

\global\long\def\bp{\boxplus}%

\global\long\def\rcone{\triangleright}%

\global\long\def\lcone{\triangleleft}%

\global\long\def\S{\mathsection}%

\global\long\def\p{\prime}%

\global\long\def\pp{\prime\prime}%

\global\long\def\W{\overline{W}}%

\global\long\def\rel{\mathrm{rel}}%

\title{Homotopy Limits and Homotopy Colimits of Chain Complexes}
\author{Kensuke Arakawa}
\email{arakawa.kensuke.22c@st.kyoto-u.ac.jp}
\address{Department of Mathematics, Kyoto University, Kyoto, 606-8502, Japan}
\subjclass[2020]{18G35, 55U15, 57T30}
\keywords{Homotopy colimits, bar construction, chain complexes}
\begin{abstract}
We give a formula for homotopy limits and homotopy colimits of diagrams
of chain complexes using the cobar and bar constructions, also known
as the Bousfield--Kan formula. Along the way, we show that the Bousfield--Kan
formula computes homotopy colimits in any framed model category.

\tableofcontents{}
\end{abstract}

\maketitle

\section*{Introduction}

Ordinary colimits (and dually, limits) do not get along well with
homotopical considerations. So when we think about objects up to homotopy,
we instead need to use homotopy colimits, which satisfy the homotopical
version of the universal properties of limits and colimits. A standard
approach to compute homotopy colimits is to deform a diagram to a
homotopically better behaved (e.g., projectively cofibrant) diagram,
and then taking the colimit of it. However, very often, this does
not give us anything concrete, because the deformation becomes quite
complicated as soon as the diagram gets mildly complex. 

For simplicial model categories, there is an alternative approach
to computing homotopy colimits, due to Bousfield and Kan \cite{BK72}.
If $\cal C$ is a simplicial model category, then the homotopy colimit
of a pointwise cofibrant diagram $F:\cal I\to\cal C$ is modeled by
the \textbf{bar construction} $B\pr{\ast,\cal I,F}$ of $F$, which
is the geometric realization of the simplicial object $B_{\bullet}\pr{\ast,\cal I,F}$
defined by
\[
B_{n}\pr{\ast,\cal I,F}=\coprod_{f:[n]\to\cal I}F\pr{f\pr 0}.
\]
(See \cite[Chapter 5]{cathtpy} for a textbook account.) Compared
to the colimit of the mystical projective cofibrant replacement, the
Bousfield--Kan formula gives us a very concrete model of homotopy
colimits.\footnote{Some coend calculus shows that $B\pr{\ast,\cal I,F}$ is isomorphic
to $N\pr{\cal I_{-/}}\otimes_{\cal I}F$, where $N$ denotes the nerve
functor and $\otimes_{\cal I}$ denotes the functor tensor product
\cite[Theorem 6.6.1]{cathtpy}. Historically, Bousfield and Kan introduced
homotopy colimits by using this functor tensor product.}

We can try to blindly apply this formula to diagrams of chain complexes:
If $\cal A$ is a cocomplete abelian category and $F:\cal I\to\Ch\pr{\cal A}$
is a small diagram, we can form the simplicial object $B_{\bullet}\pr{\ast,\cal I,F}$
as above. We can then assemble this simplicial object into a single
chain complex $B\pr{\ast,\cal I,F}$ by taking its ``geometric realization''
\[
B\pr{\ast,\cal I,F}=\int^{[n]\in\Del}N_{\ast}\pr{\Delta^{n}}\otimes B_{n}\pr{\ast,\cal I,F},
\]
where $N_{\ast}$ denotes the normalized chain complex (Definition
\ref{def:normalized}) of simplicial sets. (This geometric realization
is given by the direct sum totalization of the double complex associated
to $B_{\bullet}\pr{\ast,\cal I,F}$; see Proposition \ref{prop:realizatoin_totalization}.)
However, since $\Ch\pr{\cal A}$ often cannot be made into a simplicial
model category,\footnote{We can make $\Ch\pr{\cal A}$ into a simplicial category using the
Dold--Kan correspondence, but the resulting simplicial category is
almost never tensored over simplicial sets \cite[Warning 1.3.5.4]{HA}.
In particular, we often cannot make it into a simplicial model category.} it is not clear whether $B\pr{\ast,\cal I,F}$ models homotopy colimits.

Our main results, of which there are two, give precise conditions
under which the Bousfield--Kan formula computes homotopy colimits
of chain complexes. To state the first result, recall that the category
$\Ch\pr{\cal A}$ frequently admits a model structure whose weak equivalences
are quasi-isomorphisms. We then prove the following:
\begin{thm}
[Theorem \ref{thm:main1_precise}]\label{thm:main1}Let $\cal A$
be bicomplete abelian category, and suppose $\Ch\pr{\cal A}$ is equipped
with a model structure. Under a mild assumption on the model structure,
the homotopy colimit of a small, pointwise cofibrant diagram $F:\cal I\to\Ch\pr{\cal A}$
is modeled by $B\pr{\ast,\cal I,F}$.
\end{thm}

The homotopy theory of chain complexes is among the cleanest, so one
wonders if model structures are necessary at all for the current discussion.
Our second main result addresses this point:
\begin{thm}
[Theorem \ref{thm:main2_precise}]\label{thm:main2}Let $\cal A$
be an abelian category, and let $\kappa$ be a regular cardinal. Suppose
that $\cal A$ has $\kappa$-small coproducts. The following conditions
are equivalent:
\begin{enumerate}
\item Monomorphisms in $\cal A$ are stable under $\kappa$-small coproducts.
\item For every $\kappa$-small diagram $F:\cal I\to\Ch\pr{\cal A}$, the
bar construction $B\pr{\ast,\cal I,F}$ models the homotopy colimit
of $F$ (with respect to quasi-isomorphisms).
\end{enumerate}
\end{thm}

For example, by taking $\kappa=\omega$, we deduce that homotopy colimits
of chain complexes indexed by finite categories (i.e., categories
with only finitely many morphisms) can always be modeled by the Bousfield--Kan
formula.

Of course, there are dual versions of these theorems, relating homotopy
limits with cobar constructions, and they are included in Theorems
\ref{thm:main1_precise} and \ref{thm:main2_precise}.

Here is an outline of this paper. This paper has three sections:
\begin{enumerate}
\item In Section \ref{sec:BK}, we recall the theory of deformations, and
show that the Bousfield--Kan formula and its variations fit into
the framework of deformations (Theorem \ref{thm:BK} and Corollary
\ref{cor:BK}). This will tie the loose end left by \cite{Hirschhorn},
where homotopy colimits are \textit{defined} via the Bousfield--Kan
formula and never compared to one produced by deformation. 
\item Section \ref{sec:geom} studies geometric realization of simplicial
chain complexes. We show that, under the Dold--Kan correspondence,
geometric realization corresponds to totalization of double complexes.
We then establish a few properties of geometric realization, which
we need in Section \ref{sec:main}.
\item In Section \ref{sec:main}, we give proofs of Theorems \ref{thm:main1}
and \ref{thm:main2}. We also explain that many model structures on
chain complexes satisfy the hypothesis of Theorem \ref{thm:main1}.
\end{enumerate}

\subsection*{Related Work}

Given its classical nature, the topic of this paper has been partially
covered in many different papers. For example, homotopy limits of
positive chain complexes of modules over a ring is considered in \cite[Section 4]{Porter77}.
A version of Theorem \ref{thm:main1} for homotopy colimits of chain
complexes of vector spaces or modules over a ring appears in \cite[Section 2.2]{MR4163860}.
Also, the implication (1)$\implies$(2) of Theorem \ref{thm:main2}
for homotopy colimits of positive chain complexes in AB4 abelian categories
appears in \cite{Rodriguez14}. All of these papers use different
techniques from ours, which are of independent interest. The references
listed here are likely not exhaustive, but we are not aware of a comprehensive
treatment of the subject of this paper.

\subsection*{Notation and Convention}
\begin{itemize}
\item Model categories are assumed to be bicomplete and have functorial
factorizations.
\item We let $\Del$ denote the category whose objects are the posets $[n]=\{0,\dots,n\}$,
where $n\geq0$, and whose morphisms are the poset maps. We let $\Del_{\inj}\subset\Del$
denote the subcategory spanned by the injective poset maps. For each
$n\geq0$, we will write $\Del_{\leq n}\subset\Del$ and $\Del_{\inj,\leq n}\subset\Del_{\inj}$
for the full subcategory spanned by the objects $[0],\dots,[n]$. 
\item Let $\kappa$ be a regular cardinal. A category is said to be \textbf{$\kappa$-small}
if its set of morphisms has cardinality less than $\kappa$.
\item Let $\kappa$ be a regular cardinal. 
\begin{itemize}
\item An \textbf{$\ABF_{\kappa}$ abelian category} is an abelian category
with $\kappa$-small coproducts (i.e., coproducts indexed by sets
of cardinality less than $\kappa$), such that monomorphisms are stable
under $\kappa$-small coproducts. 
\item An \textbf{$\ABF$ abelian category} is an abelian category which
is $\ABF_{\lambda}$ for any regular cardinal $\lambda$.
\item An $\ABF^{*}_{\kappa}$ abelian category is an abelian category whose
opposite is $\ABF_{\kappa}$.
\item An $\ABF^{*}$ abelian category is an abelian category whose opposite
is $\ABF$.
\end{itemize}
\item We write $\omega=\aleph_{0}$ for the first infinite cardinal, and
write $\Omega=\aleph_{1}$ for the first uncountable cardinal.
\item Let $\cal C$ be a model category, and let $\cal I$ be a category.
We say that a functor $F:\cal I\to\cal C$ is \textbf{pointwise cofibrant}
if $F$ carries each object to a cofibrant object. We define pointwise
fibrant diagrams, pointwise weak equivalences (also called natural
weak equivalences), etc, in a similar manner.
\end{itemize}

\section{\label{sec:BK}Bousfield--Kan Formula in (Weakly) Framed Model Categories}

Let $\cal C$ be a model category. A \textit{weak cosimplicial framing}
on $\cal C$, roughly speaking, is a functorial choice of cosimplicial
resolutions of cofibrant objects in $\cal C$ (Definition \ref{def:weak_frame}).
If $\cal C$ is a weakly cosimplicially framed model category, we
may associate to each pointwise cofibrant simplicial object its \textit{geometric
realization} (Definition \ref{def:realization}). Using this, we can
formally mimic the Bousfield--Kan formula. The goal of this section
is to show that a minor variation of this formula computes homotopy
colimits.

We start by recalling the definition of derived functors, which we
use to construct homotopy colimits (Subsection \ref{subsec:der}).
In Subsection \ref{subsec:BK}, we show that the ``fat'' version
of the Bousfield--Kan formula can be used to derive colimits, and
give a condition under which we can reduce it to the ordinary Bousfield--Kan
formula (Theorem \ref{thm:BK} and Corollary \ref{cor:BK}).

\subsection{\label{subsec:der}Homotopy Colimits and Derived Functors}

In this subsection, we recall the definition of homotopy colimits
and explain their relation to derived functors. The contents of this
subsection is mostly a retelling of \cite[Chapter 2]{cathtpy}. 
\begin{defn}
\label{def:homotopical}A \textbf{relative category} is a category
$\cal C$ equipped with a subcategory whose morphisms are called \textbf{weak
equivalences} and which contains all objects of $\cal C$. If $\cal D$
is another relative category, a functor $\cal C\to\cal D$ is said
to be\textbf{ relative} if it preserves weak equivalences.

If $\cal C$ is a relative category, the \textbf{localization} of
$\cal C$ at weak equivalences is a functor $\cal C\to\Ho\pr{\cal C}$
which is characterized (up to equivalence) by the following universal
property: For every category $\cal E$, the functor
\[
\Fun\pr{\Ho\pr{\cal C},\cal E}\to\Fun\pr{\cal C,\cal E}
\]
is fully faithful, and its essential image consists of those functors
$\cal C\to\cal E$ that carry weak equivalences to isomorphisms. We
refer to $\Ho\pr{\cal C}$ as the \textbf{homotopy category} of $\cal C$.
We generally do not notationally distinguish between objects in $\cal C$
and their images in the homotopy category.
\end{defn}

\begin{example}
Every model category can be regarded as a relative category. If $\cal C$
is a relative category and $\cal I$ is a category, we will regard
$\cal C^{\cal I}$ as a relative category by declaring that its weak
equivalences are the natural weak equivalences, i.e., natural transformations
whose components are weak equivalences.
\end{example}

\begin{defn}
\label{def:hocolim}Let $\cal C$ be a relative category, and let
$\cal I$ be another category. The diagonal functor $\delta:\cal C\to\cal C^{\cal I}$
is a relative functor, so it induces a functor $\Ho\pr{\delta}:\Ho\pr{\cal C}\to\Ho\pr{\cal C^{\cal I}}$.
The \textbf{homotopy colimit }functor $\hocolim_{\cal I}:\Ho\pr{\cal C^{\cal I}}\to\Ho\pr{\cal C}$,
if it exists, is defined as the left adjoint of $\Ho\pr{\delta}$. 
\end{defn}

\begin{rem}
Definition \ref{def:hocolim} says nothing about the existence of
homotopy colimit functors. We will see in Theorem \ref{thm:BK} that
they \textit{always} exist for small diagrams in model categories.
(This can also be proved by resorting to $\infty$-categorical calculus
of fractions \cite[Remark 7.9.10]{CisinskiHCHA}.) 
\end{rem}

We will see that homotopy colimit functors arise as ``best homotopical
approximations'' to ordinary colimit functors. To make this more
precise, we need the notion of derived functors.
\begin{defn}
\cite[Definitions 2.1.17, 2.1.19]{cathtpy}\label{def:der} Let $\cal C$
and $\cal D$ be relative categories, and let $\gamma_{\cal C}:\cal C\to\Ho\pr{\cal C}$
and $\gamma_{\cal D}:\cal D\to\Ho\pr{\cal D}$ denote the localizations
at weak equivalences. 
\begin{enumerate}
\item A \textbf{total left derived functor} of $F$ is a functor $\bf LF:\Ho\pr{\cal C}\to\Ho\pr{\cal D}$
equipped with a natural transformation depicted as
\[\begin{tikzcd}
	{\mathcal{C}} & {\mathcal{D}} \\
	{\operatorname{Ho}(\mathcal{C})} & {\operatorname{Ho}(\mathcal{D}),}
	\arrow[""{name=0, anchor=center, inner sep=0}, "F", from=1-1, to=1-2]
	\arrow["{\gamma_\mathcal{C}}"', from=1-1, to=2-1]
	\arrow["{\gamma_\mathcal{D}}", from=1-2, to=2-2]
	\arrow[""{name=1, anchor=center, inner sep=0}, "{\mathbf{L}F}"', from=2-1, to=2-2]
	\arrow[shorten <=4pt, shorten >=4pt, Rightarrow, from=1, to=0]
\end{tikzcd}\]which exhibits $\bf LF$ as a \textit{right} Kan extension of $\gamma_{\cal D}\circ F$
along $\gamma_{\cal C}$. If further this is an absolute right Kan
extension (i.e., for any functor $G:\Ho\pr{\cal D}\to\cal E$, the
natural transformation $G\circ\bf LF\circ\gamma_{\cal C}\to G\circ\gamma_{\cal D}\circ F$
remains to exhibit $G\circ\bf LF$ as a right Kan extension), we say
that the total left derived functor is \textbf{absolute}.
\item A \textbf{left derived functor} of $F$ is a relative functor $\bb LF:\pr{\cal C,\cal W_{\cal C}}\to\pr{\cal D,\cal W_{\cal D}}$
equipped with a natural transformation $\lambda:\bb LF\To F$ with
the following property: Let $\bf LF:\Ho\pr{\cal C}\to\Ho\pr{\cal D}$
be any functor that admits a natural isomorphism $\bf LF\circ\gamma_{\cal C}\cong\gamma_{\cal D}\circ\bb LF$.
Then the composite
\[
\bf LF\circ\gamma_{\cal C}\cong\gamma_{\cal D}\circ\bb LF\stackrel{\lambda}{\To}\gamma_{\cal D}\circ F
\]
exhibits $\bf LF$ as a total left derived functor of $F$. If further $\bf LF$ is an absolute total left
derived functor, then we say that the left derived functor $\bb LF$
is \textbf{absolute}. 
\end{enumerate}
\textbf{Total right derived functors} and \textbf{right derived functors}
are defined dually.
\end{defn}

We now introduce a standard technique to construct derived functors.
\begin{defn}
\cite[Definition 2.2.4, Lemma 5.1.6]{cathtpy}\label{def:deform}
Let $\cal C$ and $\cal D$ be relative categories, and let $F:\cal C\to\cal D$
be a functor. A \textbf{left deformation }for $F$ is a natural transformation
$q:Q\To\id_{\cal C}$ of endofunctors of $\cal C$, satisfying the
following pair of conditions:
\begin{enumerate}
\item The natural transformation $q$ is a natural weak equivalence.
\item The functor $F\circ Q:{\cal C}\to\cal D$ is relative.
\end{enumerate}
If $F$ admits a left deformation, we say that $F$ is \textbf{left
deformable}.\textbf{ }We define \textbf{right deformations} similarly.
\end{defn}

\begin{thm}
\cite[Theorem 2.2.13]{cathtpy}\label{thm:2.2.8} Let $\cal C$ and
$\cal D$ be relative categories, and let $F:\cal C\to\cal D$ be
a functor. If $F$ admits a left deformation $q:Q\To\id_{\cal C}$,
then the pair $\pr{F\circ Q,Fq}$ is an absolute left derived functor
of $F$. 
\end{thm}

When absolute derived functors exist for a pair of adjoint functors,
they again form an adjoint pair:
\begin{thm}
\cite[Theorem 2.2.11]{cathtpy}, \cite{Mal07}\label{thm:2.2.11} Let
$\cal C$ and $\cal D$ be relative categories, and let $F:\cal C\adj\cal D:G$
be an adjoint pair of functors of underlying categories. If $F$ admits
an absolute total left derived functor and $G$ admits an absolute
total right derived functor, then the total derived functors $\mathbf{L}F:\Ho\pr{\cal C}\rl\Ho\pr{\cal D}:\mathbf{R}G$
are part of an adjunction characterized by the property that, for
every $C\in\cal C$ and $D\in\cal D$, the diagram
\[\begin{tikzcd}
	{\operatorname{Hom}_{\mathcal{D}}(F(C),D)} & {\operatorname{Hom}_{\mathcal{C}}(C,G(D))} \\
	{\operatorname{Hom}_{\operatorname{Ho}(\mathcal{D})}(\mathbf{L}F(C),D)} & {\operatorname{Hom}_{\operatorname{Ho}(\mathcal{C})}(C,\mathbf{R}G(D))}
	\arrow["\cong", from=1-1, to=1-2]
	\arrow[from=1-1, to=2-1]
	\arrow[from=1-2, to=2-2]
	\arrow["\cong"', from=2-1, to=2-2]
\end{tikzcd}\]commutes.
\end{thm}

If $\cal C$ is a model category and $\cal I$ is a small category,
the diagonal functor $\cal C\to\cal C^{\cal I}$ is a relative functor,
so it admits an absolute total right derived functor. Therefore, if
the colimit functor $\colim_{\cal I}:\cal C^{\cal I}\to\cal C$ admits
a left deformation, we can use Theorem \ref{thm:2.2.11} to construct
the homotopy colimit functor. This is how we construct the homotopy
colimit functor in the next subsection.

\subsection{\label{subsec:BK}Bousfield--Kan Formula in Framed Model Categories}

In this section, we define weakly simplicially framed model categories,
and show that variations of Bousfield--Kan formula model homotopy
colimits in these categories (Theorem \ref{thm:BK}, Corollary \ref{cor:BK}).

We start with a few definitions.
\begin{defn}
\label{def:resolution}Let $\cal C$ be a model category, let $\cal I$
be a category, and let $F:\cal I\to\cal C$ be a functor. A \textbf{cosimplicial
resolution} of $F$ is a functor $\bf F:\Del\times\cal I\to\cal C$
equipped with a natural weak equivalence $\alpha:\bf F\xrightarrow{\simeq}F\circ\opn{pr}$,
where $\opn{pr}:\Del\times\cal I\to\cal I$ denotes the projection,
such that for each $i\in\cal I$, the cosimplicial object $\bf F\pr i$
is Reedy cofibrant. We let ${\rm csRes}\pr F\subset\Fun\pr{\Del\times\cal I,\cal C}_{/F\circ\opn{pr}}$
denote the full subcategory spanned by the cosimplicial resolution
of $F$, which is a (possibly large) weakly contractible category
\cite[Theorem 14.5.4]{Hirschhorn}.\footnote{The proof in loc. cit. contains an error, which is corrected in \url{https://math.mit.edu/~psh/MCATL-errata-2018-08-01.pdf}.}
We define \textbf{simplicial resolutions} of $F$ dually.
\end{defn}

\begin{defn}
\label{def:weak_frame}Let $\cal C$ be a model category. A \textbf{weak
cosimplicial framing} on $\cal C$ is a cosimplicial resolution of
the inclusion $\cal C^{c}\hookrightarrow\cal C$, where $\cal C^{c}\subset\cal C$
denotes the full subcategory of cofibrant objects. The corresponding
bifunctor $\Del\times\cal C^{c}\to\cal C$ determines, via left Kan
extension along the Yoneda embedding, a bifunctor $\SS\times\cal C^{c}\to\cal C$.
We typically denote this bifunctor by $\otimes$ and abuse language
by saying that $\otimes$ is a weak cosimplicial framing. (Note that
for each $C\in\cal C^{c}$, the functor $-\otimes C:\SS\to\cal C$
is left Quillen \cite[Proposition 16.5.6]{Hirschhorn}.) If the bifunctor
$\otimes$ can be extended to a left Quillen bifunctor $\SS\times\cal C\to\cal C$,
we say that the weak cosimplicial framing is \textbf{excellent}. A
model category equipped with a weak cosimplicial framing is called
a \textbf{weakly cosimplicially framed model category}. 

Dually, a \textbf{weak simplicial framing }on $\cal C$ is a weak
cosimplicial framing on $\cal C^{\op}$. The corresponding bifunctor
will often be denoted by $\SS^{\op}\times\cal C\to\cal C$, $\pr{K,C}\mapsto C^{K}$.

A \textbf{weakly framed model category} is a model category equipped
with a weak cosimplicial framing and a weak simplicial framing.
\end{defn}

\begin{rem}
Every framed model category in the sense of \cite[Definition 16.6.21]{Hirschhorn}
has a canonical weak framing, and this is why we use the adjective
``weak''. Since every model category has a framing \cite[Theorem 16.6.9]{Hirschhorn},
it follows in particular that every model category admits a weak framing.
\end{rem}

\begin{example}
\label{exa:enr->wst}Every simplicial model category admits an excellent
simplicial framing, given by tensors by simplicial sets. More generally,
we can endow every enriched model category with an excellent weak
simplicial framing: Recall that a \textbf{symmetric monoidal model
category} is a symmetric monoidal category $\pr{\cal V,\otimes,\bf 1}$
equipped with a model structure satisfying the following pair of axioms:
\begin{enumerate}
\item The tensor bifunctor $\otimes:\cal V\times\cal V\to\cal V$ is a left
Quillen bifunctor.
\item Let $q:\widetilde{\bf 1}\to\bf 1$ be a weak equivalence, where $\widetilde{\bf 1}$
is cofibrant. For every cofibrant object $X\in\cal V$, the map $q\otimes X$
is a weak equivalence.
\end{enumerate}
A \textbf{model $\cal V$-category} is a $\cal V$-enriched category
$\cal M$ with a model structure on its underlying category, such
that the tensor bifunctor $\otimes:\cal V\times\cal M\to\cal M$ is
a left Quillen bifunctor.

Let $\cal V$ be a symmetric monoidal model category, and let $\cal M$
be a model $\cal V$-category. A choice of a cosimplicial resolution
of the unit object $\bf 1\in\cal V$ determines a left Quillen functor
$F:\SS\to\cal V$ \cite[Proposition 16.5.6]{Hirschhorn}. The composite
\[
\SS\times\cal M\xrightarrow{F\times\id}\cal V\times\cal M\xrightarrow{\otimes}\cal M
\]
gives rise to an excellent weak simplicial framing on $\cal M$.
\end{example}

\begin{defn}
\label{def:realization}Let $\cal C$ be a category equipped with
bifunctors $\otimes:\SS\times\cal C\to\cal C$ and $\pr -^{-}:\SS^{\op}\times\cal C\to\cal C$.
\begin{itemize}
\item The \textbf{geometric realization} of a simplicial object $X\in\cal C^{\Del^{\op}}$
is defined by the coend (provided that it exists)
\[
\abs X=\int^{[n]\in\Del}\Delta^{n}\otimes X_{n}.
\]
\item The \textbf{fat geometric realization} of a semi-simplicial object
$X\in\cal C^{\Del^{\op}_{\inj}}$ is defined by the coend
\[
\norm X=\int^{[n]\in\Del_{\inj}}\Delta^{n}\otimes X_{n}
\]
\item The \textbf{totalization} of a cosimplicial object $Y\in\cal C^{\Del}$
is defined by the end
\[
\Tot\pr Y=\int_{[n]\in\Del}\pr{Y^{n}}^{\Delta^{n}}.
\]
\item The \textbf{fat totalization} of a semi-simplicial object $Y\in\cal C^{\Del^{\op}_{\inj}}$
is defined by the end
\[
\Tot^{{\rm fat}}\pr Y=\int_{[n]\in\Del_{\inj}}\pr{Y^{n}}^{\Delta^{n}}.
\]
\end{itemize}
\end{defn}

\begin{defn}
\label{def:bar_chain}Let $\cal C$ and $\cal I$ be categories, and
let $F:\cal I\to\cal C$ be a diagram.
\begin{enumerate}
\item Let $W:\cal I^{\op}\to\Set$ be a diagram. The\textbf{ simplicial
bar construction} $B_{\bullet}\pr{W,\cal I,F}$ is the simplicial
object in $\cal C$ defined by
\[
B_{n}\pr{W,\cal I,F}=\coprod_{i_{0}\to\cdots\to i_{n}}W\pr{i_{n}}\cdot F\pr{i_{0}},
\]
where the coproduct is indexed by the functors $[n]\to\cal I$, and
for a set $S$ and $C\in\cal C$, we wrote $S\cdot C=\coprod_{s\in S}C$.
Here we tacitly assume that the relevant coproducts exist. 

If $\cal C$ is equipped with a bifunctor $\SS\times\cal C\to\cal C$,
the geometric realization of $B_{\bullet}\pr{W,\cal I,F}$ is called
the \textbf{bar construction} and is denoted by $B\pr{W,\cal I,F}$.
The fat geometric realization of $B_{\bullet}\pr{W,\cal I,F}$ is
called the \textbf{fat bar construction} and is denoted by $B^{{\rm fat}}\pr{W,\cal I,F}$.
\item Let $W:\cal I\to\Set$ be a diagram. The \textbf{cosimplicial cobar
construction} of $F$ and $W$ is the cosimplicial object $C^{\bullet}\pr{W,\cal I,F}\in\cal C^{\Del}$
whose $n$th term is given by
\[
C^{n}\pr{W,\cal I,F}=\prod_{i_{0}\to\cdots\to i_{n}}F\pr{i_{n}}^{W\pr{i_{0}}}.
\]
If $\cal C$ is equipped with a bifuncor $\SS^{\op}\times\cal C\to\cal C$,
the totalization of $C^{\bullet}\pr{W,\cal I,F}$ is called the \textbf{cobar
construction} and is denoted by $C\pr{W,\cal I,F}$. The fat totalization
of $C^{\bullet}\pr{W,\cal I,F}$ is called the \textbf{fat cobar construction}
and is denoted by $C^{{\rm fat}}\pr{W,\cal I,F}$. 
\end{enumerate}
\end{defn}

\begin{rem}
\label{rem:not_dual}We can think of the cosimplicial cobar construction
as a ``dual'' of the simplicial bar construction in the following
sense. Suppose we are given functors $W:\cal I\to\Set$ and $F:\cal I\to\cal C$.
From $W$ and $F$, we can construct two cosimplicial objects $\Del\to\cal C$:
\begin{enumerate}
\item We can form the cosimplicial cobar construction $C^{\bullet}\pr{W,\cal I,F}$.
\item Let $F^{\op}$ denote the functor $F$, thought of as a functor $\cal I^{\op}\to\cal C^{\op}$.
We can then form the simplicial bar construction $B_{\bullet}\pr{W,\cal I^{\op},F^{\op}}:\Del^{\op}\to\cal C^{\op}$.
We can regard this as a cosimplicial object $\Del\to\cal C$, which
we denote by $B_{\bullet}\pr{W,\cal I^{\op},F^{\op}}^{\op}$.
\end{enumerate}
These two cosimplicial objects are related by the opposition functor
$\pr -^{\op}:\Del\to\Del$, which carries a poset map $f:[n]\to[m]$
to the poset map $f^{\op}:[n]\to[m]$ defined by $f^{\op}\pr{n-i}=m-f\pr i$.
More precisely, $C^{\bullet}\pr{W,\cal I,F}$ is equal to the composite
\[
\Del\xrightarrow{\pr -^{\op}}\Del\xrightarrow{B_{\bullet}\pr{W,\cal I^{\op},F^{\op}}^{\op}}\cal C.
\]

Because of this, we will frequently say that results on cobar constructions
are ``dual'' to those of bar constructions, trusting that the readers
can make necessary changes if necessary.
\end{rem}

We can now state the main result of this subsection.
\begin{thm}
\label{thm:BK}Let $\cal C$ be a weakly framed model category with
a cofibrant replacement $Q\to\id_{\cal C}$ and a fibrant replacement
$\id_{\cal C}\to R$. Let $\cal I$ be a small category.
\begin{enumerate}
\item The composite natural transformation
\[
B^{{\rm fat}}\pr{\ast,\cal I,Q\circ-}\to\colim_{\cal I}Q\circ-\to\colim_{\cal I}
\]
exhibits $B^{{\rm fat}}\pr{\ast,\cal I,Q\circ-}$ as an absolute left
derived functor of $\colim_{\cal I}$. If the weak simplicial framing
is excellent, the same conclusion holds for $B$ instead of $B^{{\rm fat}}$.
\item The composite natural transformation
\[
\lim_{\cal I}\to\lim_{\cal I}R\circ-\to C^{{\rm fat}}\pr{\ast,\cal I,R\circ-}
\]
exhibits $C^{{\rm fat}}\pr{\ast,\cal I,R\circ-}$ as an absolute right
derived functor of $\lim_{\cal I}$. If the weak cosimplicial framing
is excellent, the same conclusion holds for $C$ instead of $C^{{\rm fat}}$.
\end{enumerate}
\end{thm}

\begin{rem}
For weak cosimplicial framing arising from enriched categories (Example
\ref{exa:enr->wst}), Vok\v{r}\'{i}nek proved a version of Theorem
\ref{thm:BK} that applies more generally to homotopy weighted colimits
\cite[Theorem 2]{Vok12}. 
\end{rem}

We also prove the following variation of Theorem \ref{thm:BK} for
functor tensor products and functor cotensor products \cite[$\S$4.3]{cathtpy}:
\begin{cor}
\label{cor:BK}Let $\cal C$ be a weakly framed model category with
a cofibrant replacement $Q\to\id_{\cal C}$ and a fibrant replacement
$\id_{\cal C}\to R$. Let $\cal I$ be a small category.
\begin{enumerate}
\item For every projectively cofibrant diagram $W\in\SS^{\cal I^{\op}}$
which is pointwise weakly contractible, the natural transformation
\[
W\otimes_{\cal I}\pr{Q\circ-}\to\colim_{\cal I}
\]
exhibits $W\otimes_{\cal I}\pr{Q\circ-}$ as an absolute left derived
functor of $\colim_{\cal I}$.
\item For every projectively cofibrant diagram $W\in\SS^{\cal I}$ which
is pointwise weakly contractible, the natural transformation
\[
\lim_{\cal I}\to\{W,R\circ-\}^{\cal I}
\]
exhibits $\{N\pr{\cal I_{/-}},R\circ-\}^{\cal I}$ as an absolute
left derived functor of $\lim_{\cal I}$.
\end{enumerate}
\end{cor}

\begin{rem}
In \cite{Hirschhorn}, Hirschhorn \textit{defines} homotopy (co)limits
in framed model categories by the functor tensor products appearing
in Corollary \ref{cor:BK}. In spite of its foundational nature, it
seems that a proof of the equivalence between Hirschhorn's definition
and our definition of homotopy colimits (Definition \ref{def:hocolim})
had not appeared in the literature for some time. Quite recently,
Arkhipov--{\O}rsted finally gave a proof of the equivalence for
\textit{combinatorial} model categories \cite{AO23}. Corollary \ref{cor:BK}
applies to \textit{all} model categories, so it improves on their
result.
\end{rem}

\begin{rem}
After the completion of the paper, the author was made aware of an
independent work of Rodr\'iguez Gonz\'alez, \cite{Rodriguez14},
which considers the Bousfield--Kan formula in the setting of relative
categories. In particular, Corollary \ref{cor:BK} appears as Theorem
4.2 of loc. cit. as a consequence of formalisms of ``simplicial descent
categories.'' While the approach of loc. cit. encompasses more general
examples than ours, our approach is much shorter and elementary. We
should also stress that only a sketch proof is given in loc. cit.,
as explicitly stated by the author.
\end{rem}

\begin{example}
\label{exa:real_hocolim}Let $\cal C$ be a weakly framed model category
with a cofibrant replacement $Q$. Corollary \ref{cor:BK} (applied
to $\cal I=\Del^{\op}_{\inj}$ and $W=\Delta^{\bullet}$) proves that
the composite $\norm -\circ Q:\cal C^{\Del^{\op}_{\inj}}\to\cal C$
is an absolute left derived functor of $\colim_{\Del^{\op}_{\inj}}$.
If further the weak cosimplicial framing is excellent, Corollary \ref{cor:BK}
(combined with Lemma \ref{lem:reedy_end}, Remark \ref{rem:Reedy},
and \cite[Theorems 14.5.4 and 19.3.1]{Hirschhorn}) implies that the
functor $\abs -\circ\overline{Q}:\cal C^{\Del^{\op}}\to\cal C$ is
an absolute left derived functor of $\colim_{\Del^{\op}}$, where
$\overline{Q}$ denotes a Reedy cofibrant replacement functor of $\cal C^{\Del^{\op}}$.
\end{example}

The remainder of this section is devoted to the proof of Theorem \ref{thm:BK}
and Corollary \ref{cor:BK}. Our strategy is to reduce this to the
case of simplicial model categories, using Lemma \ref{lem:simplicial_test}
below. For expositional purposes, we start by recalling a few results
on Reedy categories.
\begin{notation}
Let $\cal I$ and $\cal J$ be Reedy categories \cite[Definition 5.2.1]{Hovey}
with degree functions $\opn{deg}:\opn{ob}\cal I\to\lambda$ and $\deg:\opn{ob}\cal J\to\mu$,
where $\lambda$ and $\mu$ are ordinals. Regard the set $\lambda\times\mu$
as equipped with the lexicographic order. (Thus $\pr{x,y}\leq\pr{x',y'}$
if and only if either $x<x'$, or $x=x'$ and $y\leq y'$.) We will
regard $\cal I\times\cal J$ as a Reedy category by setting $\pr{\cal I\times\cal J}_{+}=\cal I_{+}\times\cal J_{+}$
and $\pr{\cal I\times\cal J}_{-}=\cal I_{-}\times\cal J_{-}$, with
degree function given by $\deg\times\deg:\opn{ob}\cal I\times\opn{ob}\cal J\to\lambda\times\mu$.
\end{notation}

\begin{rem}
\label{rem:Reedy}Let $\cal C$ be a model category, and let $\cal I$
and $\cal J$ be Reedy categories. The latching object of $F\in\cal C^{\cal I\times\cal J}$
at $\pr{i,j}\in\cal I\times\cal J$ fits into the pushout
\[\begin{tikzcd}
	{L_iL_jF} & {L_{i}F(-,j)} \\
	{L_jF(i,-)} & {L_{(i,j)}F.}
	\arrow[from=1-1, to=1-2]
	\arrow[from=1-1, to=2-1]
	\arrow[from=1-2, to=2-2]
	\arrow[from=2-1, to=2-2]
\end{tikzcd}\]This implies the following:
\begin{enumerate}
\item Under the isomorphism of categories $\cal C^{\cal I\times\cal J}\cong\pr{\cal C^{\cal I}}^{\cal J}$,
the model category $\cal C^{\cal I\times\cal J}_{{\rm Reedy}}$ can
be identified with the Reedy model structure on $\pr{\cal C^{\cal J}_{{\rm Reedy}}}^{\cal I}_{{\rm Reedy}}$.
In particular, if $F\in\cal C^{\cal I\times\cal J}$ is Reedy cofibrant,
then for each $i\in\cal I$, the diagram $F\pr{i,-}$ is Reedy cofibrant.
\item Let $\cal M,\cal N,\cal P$ be model categories, let $F\in\cal M^{\cal I}$
and $G\in\cal N^{\cal J}$ be Reedy cofibrant objects, and let $\Phi:\cal M\times\cal N\to\cal P$
be a left Quillen bifunctor. Then the diagram $\Phi\circ\pr{F\times G}:\cal I\times\cal J\to\cal P$
is Reedy cofibrant.
\end{enumerate}
\end{rem}

\begin{lem}
\label{lem:reedy_end}\cite[Theorem 3.3]{AO23} Let $\cal C$ be a
model category, and let $\cal I$ be a Reedy category. The coend functor
\[
\int^{\cal I}:\cal C^{\cal I^{\op}\times\cal I}\to\cal C
\]
is left Quillen with respect to the Reedy model structure.
\end{lem}

We now focus on special coends, namely, (fat) geometric realization.
The following lemma is essentially due to Segal, who proved it in
the context of simplicial topological spaces \cite[Proposition A.1]{Seg74}.
\begin{lem}
\label{lem:Seg}Let $\cal C$ be a model category equipped with a
left Quillen bifunctor $\otimes:\SS\times\cal C\to\cal C$, and let
$X$ be a simplicial object in $\cal C$. If $X$ is Reedy cofibrant,
then the map
\[
\norm X\to\abs X
\]
is a weak equivalence of cofibrant objects.
\end{lem}

\begin{proof}
For each simplicial set $K$, let $\abs K$ and $\norm K$ denote
the geometric realization and the fat geometric realization of $K$,
regarded as a levelwise discrete simplicial object in $\SS$. In other
words, we set
\[
\abs K=\int^{[k]\in\Del}K_{k}\cdot\Delta^{k},\,\norm K=\int^{[k]\in\Del_{\inj}}K_{k}\cdot\Delta^{k}.
\]
Using the co-Yoneda lemma and the Fubini theorem for coends, we obtain
a chain of isomorphisms
\begin{align*}
\norm X & =\int^{[n]\in\Del_{\inj}}\Delta^{n}\otimes X_{n}\\
 & \cong\int^{[n]\in\Del_{\inj}}\Delta^{n}\otimes\int^{[m]\in\Del}\Delta^{m}_{n}\cdot X_{m}\\
 & \cong\int^{[m]\in\Del}\int^{[n]\in\Del_{\inj}}\Delta^{m}_{n}\cdot\Delta^{n}\otimes X_{m}\\
 & \cong\int^{[m]\in\Del}\norm{\Delta^{m}}\otimes X_{m}.
\end{align*}
Similarly, there is an isomorphism $\abs X\cong\int^{[m]\in\Del}\abs{\Delta^{m}}\otimes X_{m}$.
Under these isomorphisms, we can identify $\theta$ with the map
\[
\int^{[m]\in\Del}\norm{\Delta^{m}}\otimes X_{m}\to\int^{[m]\in\Del}\abs{\Delta^{m}}\otimes X_{m}
\]
induced by the natural transformation $\norm -\to\abs -\cong\id_{\SS}$
of functors $\Set^{\Del^{\op}}\to\SS$. Thus, by Remark \ref{rem:Reedy}
and Lemma \ref{lem:reedy_end}, it will suffice to prove the following:
\begin{enumerate}
\item The cosimplicial object $\norm{\Delta^{\bullet}}\in\SS^{\Del}$ is
Reedy cofibrant.
\item For each $m\geq0$, the map $\norm{\Delta^{m}}\to\abs{\Delta^{m}}$
is a weak homotopy equivalence.
\end{enumerate}

For (1), we observe that $\norm{\Delta^{\bullet}}=\iota_{!}\iota^{*}\pr{\Delta^{\bullet}}$,
where $\iota:\Del_{\inj}\to\Del$ denotes the inclusion and $\iota_{!}:\SS^{\Del_{\inj}}\adj\SS^{\Del}:\iota^{*}$
denotes the associated adjunction of the left Kan extension functor
and the restriction functor. Since $\Delta^{\bullet}\in\SS^{\Del}$
is Reedy cofibrant, it will suffice to show that $\iota_{!}$ and
$\iota^{*}$ are left Quillen with respect to the Reedy model structures.
For this, it suffices to show that $\iota^{*}$ is left and right
Quillen. It is clear from the definitions of Reedy model structures
that $\iota^{*}$ is left Quillen. The fact that $\iota^{*}$ is right
Quillen follows from \cite[Lemma 15.3.13]{Hirschhorn}, which says
that Reedy fibrations are pointwise fibrations.

For (2), it suffices to show that the simplicial set $\norm{\Delta^{m}}$
is weakly contractible, because $\abs{\Delta^{m}}\cong\Delta^{m}$
is weakly contractible. For this, define a new category $[m]_{+}$
as follows: Its objects are the integers $0,\dots,m$. There is exactly
one non-identity morphism $f_{ij}:i\to j$ for each pair of integers
$i\leq j$, obeying the composition law $f_{jk}f_{ij}=f_{ik}$. We
can then identify $\norm{\Delta^{m}}$ with the nerve of $[m]_{+}$.
Since $[m]_{+}$ admits a natural transformation to the constant functor
at $m\in[m]_{+}$, its nerve is contractible, as desired.

\end{proof}

We need a few more lemmas before proving Theorem \ref{thm:BK}.
\begin{lem}
\label{lem:simplicial_test}Let $\cal C$ be a model category, and
let $f:X\to Y$ be a morphism of cofibrant objects of $\cal C$. Suppose
that, for every simplicial model category $\cal D$ and every left
Quillen functor $L:\cal C\to\cal D$, the morphism $L\pr f$ is a
weak equivalence. Then $f$ is a weak equivalence.
\end{lem}

\begin{proof}
For each object $C\in\cal C$, we define a functor $\Map^{R}_{\cal C}\pr{-,C}:\cal C\to\SS^{\op}$
by $\Map^{R}_{\cal C}\pr{-,C}=\Hom_{\cal C}\pr{-,C_{\bullet}}$, where
$C\to C_{\bullet}$ is a simplicial resolution of $C$. According
to \cite[Theorem 16.5.4]{Hirschhorn}, the functor $\Map^{R}_{\cal C}\pr{-,C}$
is left Quillen. Since the functors $\{\Map^{R}_{\cal C}\pr{-,C}\}_{C\in\cal C}$
jointly reflects weak equivalences of cofibrant objects \cite[Theorem 17.7.7]{Hirschhorn},
the case where $\cal D=\SS^{\op}$ will in fact suffice, and we are
done.
\end{proof}

\begin{lem}
\label{lem:extradegeneracy}Let $\cal C$ be a weakly cosimplicially
framed model category, and let $X\to X_{-1}$ be an augmented simplicial
object admitting extra degeneracies \cite[$\S$4.5]{cathtpy}. If $X$
is Reedy cofibrant and $X_{-1}$ is cofibrant, then the map
\[
\theta:\norm X\to X_{-1}
\]
is a weak equivalence.
\end{lem}

\begin{proof}
According to Lemma \ref{lem:Seg}, the object $\norm X$ is cofibrant.
Therefore, by Lemma \ref{lem:simplicial_test}, it suffices to show
that, for each simplicial model category $\cal D$ and each left Quillen
functor $L:\cal C\to\cal D$, the morphism $L\theta$ is a weak equivalence.
For this, we observe that the both $L\pr{\Delta^{\bullet}\otimes X}$
and $\Delta^{\bullet}\otimes L\pr X$ are cosimplicial resolutions
of the functor $L\pr X:\Del\to\cal D$. Moreover, by Lemma \ref{lem:reedy_end},
any map $\bf Y\to\bf Y'$ of cosimplicial resolutions of $L\pr X$
induces a weak equiavlence
\[
\int^{\Del_{\inj}}\bf Y\xrightarrow{\simeq}\int^{\Del_{\inj}}\bf Y'.
\]
Since $\opn{csRes}\pr X$ is weakly contractible, we are therefore
reduced to showing that the map
\[
\theta':\int^{[n]\in\Del_{\inj}}\Delta^{n}\otimes L\pr{X_{n}}=\norm{L\pr X}\to L\pr{X_{-1}}
\]
is a weak equivalence. We can factor this map as
\[
\norm{L\pr X}\xrightarrow{\phi}\abs{L\pr X}\xrightarrow{\psi}L\pr{X_{-1}}.
\]
The map $\phi$ is a weak equivalence by Lemma \ref{lem:Seg}, and
the map $\psi$ is a weak equivalence since $X\to X_{-1}$ admits
extra degeneracies \cite[Corollary 4.5.2]{cathtpy}. Hence $\theta'$
is a weak equivalence, as desired.
\end{proof}

For the next two lemmas, the reader should keep in mind the following
situation: During the proof of Theorem \ref{thm:BK}, we will want
to study the fat realization $B^{\mathrm{fat}}\pr{\ast,\cal I,F}\cong\colim_{i\in\cal I}B^{\mathrm{fat}}\pr{y\pr i,\cal I,F}$,
where $y:\cal I\to\Set^{\cal I^{\op}}$ denotes the Yoneda embedding.
By definition, we have
\[
B^{\mathrm{fat}}\pr{\cal I\pr{-,i},\cal I,F}=\int^{[n]\in\Del^{\inj}}\Delta^{n}\otimes\pr{\coprod_{i_{0}\to\cdots\to i_{n}}\cal I\pr{i_{n},i}\cdot F\pr{i_{0}}}.
\]
To analyze the right-hand side, it will be convenient if $\Delta^{n}\otimes-$
commutes with colimits, or at least with coproducts. While this is
not always guaranteed for weak framing, Lemma \ref{lem:tens_set}
ensures this up to weak equivalence. Once we have this, we can use
Lemma \ref{lem:sres_hocolim} to show that $B^{\mathrm{fat}}\pr{\ast,\cal I,F}$
computes the correct colimit in the homotopical setting.
\begin{lem}
\label{lem:tens_set}Let $\cal C$ be a weakly cosimplicially framed
model category, and let $\{C_{s}\}_{s\in S}$ be a collection cofibrant
objects. For every simplicial set $K$, the map
\[
\coprod_{s\in S}\pr{K\otimes C_{s}}\to K\otimes\coprod_{s\in S}C_{s}
\]
is a weak equivalence.
\end{lem}

\begin{proof}
The functors $\coprod_{s\in S}\pr{-\otimes C_{s}}:\SS\to\cal C$ and
$-\otimes\coprod_{s\in S}C_{s}:\SS\to\cal C$ are left Quillen, so
it suffices to verify the claim for $K=\Delta^{0}$. In this case,
the claim is immediate from the definitions of weak cosimplicial framing.
\end{proof}

\begin{lem}
\label{lem:sres_hocolim}Let $\cal D$ be a simplicial model category,
let $F:\cal I\to\cal D$ be a small diagram, and let $\bf F$ be a
cosimplicial resolution of $F$. Define a functor $Q_{\bf F}:\cal I\to\cal D$
by
\[
Q_{\bf F}\pr i=\int^{[n]\in\Del_{\inj}}\coprod_{i_{0}\to\cdots\to i_{n}}\cal I\pr{i_{n},i}\cdot\bf F_{n}\pr{i_{0}}.
\]
Then the map
\[
\theta:\hocolim_{\cal I}Q_{\bf F}\to\colim_{\cal I}Q_{\bf F}
\]
is an isomorphism in $\Ho\pr{\cal D}$.
\end{lem}

\begin{proof}
Using Lemma \ref{lem:reedy_end} and the isomorphism
\[
\colim_{\cal I}Q_{\bf F}\cong\int^{[n]\in\Del_{\inj}}\coprod_{i_{0}\to\cdots\to i_{n}}\bf F_{n}\pr{i_{0}},
\]
we deduce that every morphism $\bf F^{\pr 0}\to\bf F^{\pr 1}$ of
cosimplicial resolutions of $F$ induces a weak equivalence $\colim_{\cal I}Q_{\bf F^{\pr 0}}\xrightarrow{\simeq}\colim_{\cal I}Q_{\bf F^{\pr 1}}$.
Since $\opn{csRes}\pr F$ is a weakly contractible category, it will
therefore suffice to show that the claim holds for \textit{some} simplicial
resolution of $F$.

Replacing $F$ by $\bf F_{0}$, we may assume that $F$ is pointwise
cofibrant. In this case, we can take $\bf F=\Delta^{\bullet}\otimes F$,
and we can identify $\theta$ with the map
\[
\hocolim_{i\in\cal I}B^{{\rm fat}}\pr{\cal I\pr{-,i},\cal I,F}\to\colim_{i\in\cal I}B^{{\rm fat}}\pr{\cal I\pr{-,i},\cal I,F}\cong B^{{\rm fat}}\pr{\ast,\cal I,F}
\]
To show that this is an isomorphism, it suffices (by Lemma \ref{lem:Seg})
to show that the map
\[
\hocolim_{i\in\cal I}B\pr{\cal I\pr{-,i},\cal I,F}\to\colim_{i\in\cal I}B\pr{\cal I\pr{-,i}\cal I,F}\cong B\pr{\ast,\cal I,F}
\]
is an isomorphism. This is classical \cite[Theorem 5.1.1]{cathtpy}.
\end{proof}

We now arrive at the proofs of Theorem \ref{thm:BK} and Corollary
\ref{cor:BK}.
\begin{proof}
[Proof of Theorem \ref{thm:BK}]We will prove (1); part (2) is dual.
The assertion in the second sentence follows from Lemma \ref{lem:Seg},
so we will focus on the assertion in the first sentence. 

For each pointwise cofibrant diagram $F\in\cal C^{\cal I}$ and $W\in\Set^{\cal I^{\op}}$,
we define an object $B^{{\rm fat}}_{{\rm fake}}\pr{W,\cal I,F}\in\cal C$
by 
\[
B^{{\rm fat}}_{{\rm fake}}\pr{W,\cal I,F}=\int^{[n]\in\Del_{\inj}}\coprod_{i_{0}\to\cdots\to i_{n}}W\pr{i_{n}}\cdot\pr{\Delta^{n}\otimes F\pr{i_{0}}}.
\]
Lemma \ref{lem:tens_set} shows that the maps
\[
\left\{ \coprod_{i_{0}\to\cdots\to i_{n}}W\pr{i_{n}}\cdot\pr{\Delta^{m}\otimes F\pr{i_{0}}}\to\Delta^{m}\otimes\pr{\coprod_{i_{0}\to\cdots\to i_{n}}W\pr{i_{n}}\cdot F\pr{i_{0}}}\right\} _{n,m\geq0}
\]
are weak equivalences. Thus, by Lemma \ref{lem:reedy_end}, these
maps induce a weak equivalence $B^{{\rm fat}}_{{\rm fake}}\pr{W,\cal I,F}\xrightarrow{\simeq}B^{{\rm fat}}\pr{W,\cal I,F}$
of cofibrant objects of $\cal C$. Therefore, it suffices to prove
the theorem in the case where $B^{{\rm fat}}$ is replaced by $B^{{\rm fat}}_{{\rm fake}}$.

Let $y:\cal I\to\Set^{\cal I^{\op}}$ denote the Yoneda embedding.
Using the isomorphism $\colim_{i\in\cal I}B^{{\rm fat}}_{{\rm fake}}\pr{y\pr i,\cal I,F}\cong B^{{\rm fat}}_{{\rm fake}}\pr{\ast,\cal I,F}$,\footnote{Note that the map $\colim_{i\in\cal I}B^{{\rm fat}}\pr{y\pr i,\cal I,F}\to B^{{\rm fat}}\pr{\ast,\cal I,F}$
may \textit{not} be an isomorphism, and this is why we need $B^{{\rm fat}}_{{\rm fake}}$.} we are reduced to showing that the composite natural transformation
\[
B^{{\rm fat}}_{{\rm fake}}\pr{y\pr -,\cal I,Q\circ-}\To Q\circ-\To\id_{\cal C^{\cal I}}
\]
is a left deformation for $\colim_{\cal I}$. For this, it suffices
to prove the following:

\begin{enumerate}[label=(\alph*)]

\item Let $F:\cal I\to\cal C$ be a pointwise cofibrant diagram.
For each $i\in I$, the map
\[
\theta:B^{{\rm fat}}_{{\rm fake}}\pr{y\pr i,\cal I,F}\to F\pr i
\]
is a weak equivalence.

\item Let $F_{0},F_{1}$ be pointwise cofibrant diagrams, and let
$\alpha:B^{{\rm fat}}_{{\rm fake}}\pr{y\pr -,\cal I,F_{0}}\to B^{{\rm fat}}_{{\rm fake}}\pr{y\pr -,\cal I,F_{1}}$
be a weak equivalence in $\cal C^{\cal I}$. The map $\colim_{\cal I}\alpha$
is a weak equivalence.

\end{enumerate}

For (a), we factor $\theta$ as
\[
B^{{\rm fat}}_{{\rm fake}}\pr{y\pr i,\cal I,F}\xrightarrow{\phi}B^{{\rm fat}}\pr{y\pr i,\cal I,F}\xrightarrow{\psi}F\pr i.
\]
We have already seen that $\phi$ is a weak equivalence, so it suffices
to show that $\psi$ is a weak equivalence. This follows from Lemma
\ref{lem:extradegeneracy}, because the augmented simplicial object
$B_{\bullet}\pr{y\pr i,\cal I,F}\to F\pr i$ admits extra degeneracies
and the simplicial object $B_{\bullet}\pr{y\pr i,\cal I,F}$ is Reedy
cofibrant. 

Next, we prove part (b). By Lemma \ref{lem:simplicial_test}, it suffices
to show that for each left Quillen functor $L:\cal C\to\cal D$ into
a simplicial model category $\cal D$, the map $L\pr{\colim_{\cal I}\alpha}$
is a weak equivalence. For this, it will suffice to show that, for
each pointwise cofibrant diagram $F\in\cal C^{\cal I}$, the map
\[
\hocolim_{i\in\cal I}L\pr{B^{{\rm fat}}_{{\rm fake}}\pr{y\pr i,\cal I,F}}\to\colim_{i\in\cal I}L\pr{B^{{\rm fat}}_{{\rm fake}}\pr{y\pr i,\cal I,F}}
\]
is an isomorphism in $\Ho\pr{\cal D}$. This follows from the isomorphism
\[
L\pr{B^{{\rm fat}}_{{\rm fake}}\pr{y\pr i,\cal I,F}}\cong\int^{[n]\in\Del_{\inj}}\coprod_{i_{0}\to\cdots\to i_{n}}\cal I\pr{i_{n},i}\cdot L\pr{\Delta^{n}\otimes F\pr{i_{0}}}
\]
and Lemma \ref{lem:sres_hocolim} (applied to the cosimplicial resolution
$\mathbf{F}=L\pr{\Delta^{\bullet}\otimes F}$).
\end{proof}

\begin{proof}
[Proof of Corollary \ref{cor:BK}]We will prove part (1); part (2)
is dual. By \cite[Theorem 19.3.1]{Hirschhorn}, it suffices to prove
the claim for \textit{some }projectively cofibrant diagram $W\in\SS^{\cal I^{\op}}$
which is pointwise weakly contractible. For this, recall from the
proof of Theorem \ref{thm:BK} that the natural transformation
\[
B^{{\rm fat}}_{{\rm fake}}\pr{\ast,\cal I,Q\circ-}\to\colim_{\cal I}Q\circ-\to\colim_{\cal I}
\]
exhibits $B^{{\rm fat}}_{{\rm fake}}\pr{\ast,\cal I,Q\circ-}$ as
an absolute left derived functor of $\colim_{\cal I}$. Now if $F\in\cal C^{\cal I}$
is pointwise cofibrant, we have a chain of isomorphisms
\begin{align*}
B^{{\rm fat}}_{{\rm fake}}\pr{\ast,\cal I,F} & =\int^{[n]\in\Del_{\inj}}\coprod_{i_{0}\to\cdots\to i_{n}}\Delta^{n}\otimes F\pr{i_{0}}\\
 & \cong\int^{[n]\in\Del_{\inj}}\coprod_{i_{0}\to\cdots\to i_{n}}\pr{\int^{i\in\cal I}\cal I\pr{i,i_{0}}\cdot\pr{\Delta^{n}\otimes F\pr i}}\\
 & \cong\int^{i\in\cal I}\pr{\int^{[n]\in\Del_{\inj}}\coprod_{i_{0}\to\cdots\to i_{n}}\pr{\cal I\pr{i,i_{0}}\cdot\Delta^{n}}}\otimes F\pr i\\
 & \cong B^{{\rm fat}}\pr{\ast,\cal I,y\pr -}\otimes_{\cal I}F,
\end{align*}
where $y:\cal I^{\op}\to\SS^{\cal I}$ denotes the Yoneda embedding
(composed with the inclusion $\Set^{\cal I}\hookrightarrow\SS^{\cal I}$).
Thus, to complete the proof, it suffices to show that the diagram
$B^{{\rm fat}}\pr{\ast,\cal I,y\pr -}\in\SS^{\cal I^{\op}}$ is projectively
cofibrant, and that it is pointwise weakly contractible.

By definition, $B^{{\rm fat}}\pr{\ast,\cal I,y\pr -}$ is the fat
geometric realization of the simplicial object $B_{\bullet}\pr{\ast,\cal I,y\pr -}\in\pr{\SS^{\cal I^{\op}}}^{\Del^{\op}}$.
Since representable presheaves are projectively cofibrant, this simplicial
object is Reedy cofibrant with respect to the projective model structure.
Lemma \ref{lem:reedy_end} then shows that $B^{{\rm fat}}\pr{\ast,\cal I,y\pr -}$
is projectively cofibrant. Moreover, for each $i\in\cal I$, Lemma
\ref{lem:Seg} gives us a weak equivalence
\[
B^{{\rm fat}}\pr{\ast,\cal I,y\pr i}\xrightarrow{\simeq}B\pr{\ast,\cal I,y\pr i}.
\]
The right-hand side can be identified with the nerve of $\cal I_{i/}$,
which has an initial object. Hence $B^{{\rm fat}}\pr{\ast,\cal I,y\pr i}$
is weakly contractible, as desired.
\end{proof}

\section{\label{sec:geom}Geometric Realizations of Simplicial Chain Complexes}

In ordinary category theory, colimits are built up from coproducts
and coequalizers. In homotopical settings, coequalizers are replaced
by homotopy colimits over $\Del^{\op}$, often called geometric realizations.
In this sense, geometric realizations are among the most fundamental
types of colimits in homotopical contexts. The goal of this section
is to study this fundamental notion in the category of chain complexes. 

In Subsection \ref{subsec:real_Tot}, we show that the geometric realization
of a simplicial chain complex can be rewritten as the totalization
of the associated double complexes (Proposition \ref{prop:realizatoin_totalization}).
In Subsection \ref{subsec:twomorelemmas}, we record a few more properties
of geometric realization that we will use in Section \ref{sec:main}.

\subsection{\label{subsec:real_Tot}Realizations and Totalizations of Double
Complexes}

In this subsection, we study the relation between the geometric realizations
of simplicial chain complexes and the totalizations of double complexes.
To state the main result of this subsection, we must introduce a bit
of notation.
\begin{defn}
Let $X$ be a semisimplicial object in an additive category $\cal C$.
Its \textbf{Moore complex }is the chain complex $M_{\ast}\pr X$ defined
by
\[
M_{n}\pr X=\begin{cases}
X_{n} & \text{if }n\geq0,\\
0 & \text{if }n<0.
\end{cases}
\]
If $n\geq0$, the differential $M_{n+1}\pr X\to M_{n}\pr X$ is given
by the alternating sum $\sum^{n+1}_{i=0}\pr{-1}^{i}d_{i}$ of face
maps. 

Dually, if $X$ is a semi-cosimplicial object of $\cal C$, its \textbf{Moore
complex} $M_{\ast}\pr X$ is the chain complex defined by 
\[
M_{n}\pr X=\begin{cases}
X_{-n} & \text{if }n\leq0,\\
0 & \text{if }n>0,
\end{cases}
\]
with differential given by the alternating sum of coface maps. 
\end{defn}

\begin{defn}
\label{def:normalized}Let $\cal A$ be an abelian category. Given
a simplicial object $X$ in $\cal A$, its \textbf{normalized semisimplicial
object} $X^{{\rm norm}}$ is defined by
\[
X^{{\rm norm}}_{n}=\begin{cases}
\Ker\pr{\pr{d_{i}}^{n}_{i=1}:X_{n}\to\bigoplus^{n}_{i=1}X_{n-1}} & \text{if }n>0,\\
X_{0} & \text{if }n=0,
\end{cases}
\]
with face maps induced by that of $X$. (Thus all but the $0$th face
maps vanish.) We define the \textbf{normalized chain complex} of $X$
as the Moore complex of $X^{{\rm norm}}$, and denote it by $N_{\ast}\pr X$.
We also let $D_{\ast}\pr X$ denote the subcomplex of $M_{\ast}\pr X$
defined by
\[
D_{n}\pr X=\begin{cases}
\Im\pr{\pr{s_{i}}^{n-1}_{i=0}:\bigoplus^{n-1}_{i=0}X_{n-1}\to X_{n}} & \text{if }n>0,\\
0 & \text{otherwise}.
\end{cases}
\]
We will write $\overline{M}_{\ast}\pr X=M_{\ast}\pr X/D_{\ast}\pr X$.

Dually, if $X$ is a cosimplicial object in $\cal A$, we define its
\textbf{normalized semi-cosimplicial object} $X^{{\rm norm}}$ by
\[
X^{{\rm norm}}_{n}=\Coker\pr{\bigoplus^{n-1}_{i=0}X_{n-1}\to X_{n}}.
\]
The Moore complex of $X^{{\rm norm}}$ is called the \textbf{normalized
chain complex} of $X$ and is denoted by $N_{\ast}\pr X$. We define
a quotient complex $D_{\ast}\pr X$ of $M_{\ast}\pr X$ by setting
$D_{n}\pr X=\Coim\pr{\pr{\sigma_{i}}^{n-1}_{i=0}:X_{n}\to\bigoplus^{n-1}_{i=0}X_{n-1}}$
if $n>0$ and $D_{n}\pr X=0$ otherwise, and set $\overline{M}_{\ast}\pr X=\Ker\pr{M_{\ast}\pr X\to D_{\ast}\pr X}$.

Let $\cal C$ be a preadditive category. A \textbf{double complex}
in $\cal C$ is a chain complex 
\[
\cdots\to X_{n,\ast}\to X_{n-1,\ast}\to\cdots
\]
in the category $\Ch\pr{\cal C}$ of chain complexes. More plainly,
a double complex consists of a collection $\{X_{i,j}\}_{i,j\in\bb Z}$
of objects of $\cal A$ and maps $\partial^{h}_{i,j}:X_{i,j}\to X_{i-1,j}$
and $\partial^{v}_{i,j}:X_{i,j}\to X_{i,j-1}$ which satisfy the relations
\[
\partial^{h}_{i-1,j}\partial^{h}_{i,j}=0,\,\partial^{v}_{i,j-1}\partial^{v}_{i,j}=0,\,\partial^{v}_{i-1,j}\partial^{h}_{i,j}=\partial^{h}_{i,j-1}\partial^{v}_{i,j}
\]
for all $i,j\in\bb Z$. 

Let $X$ be a double complex in $\cal C$. We define the \textbf{direct
sum totalization} $\Tot^{\oplus}\pr X$ of $X$ to be the chain complex
defined by
\[
\Tot^{\oplus}\pr X_{n}=\bigoplus_{k+l=n}X_{k,l},
\]
provided that the direct sum exists. The differential is induced by
the maps $X_{k,l}\xrightarrow{\pr{\partial^{h}_{k,l},\pr{-1}^{k}\partial^{v}_{k,l-1}}}X_{k-1,l}\oplus X_{k,l-1}.$
Dually, we define the \textbf{direct product totalization} $\Tot^{\Pi}\pr X$
to be the chain complex whose $n$th term is the product
\[
\Tot^{\Pi}\pr X_{n}=\prod_{k+l=n}X_{k,l},
\]
and whose differential is induced by the maps $\pr{\partial^{h}_{k+1,l},\pr{-1}^{k}\partial^{v}_{k,l+1}}:X_{k+1,l}\oplus X_{k,l+1}\to X_{k,l}$.
\end{defn}

\begin{notation}
Let $\cal C$ be a preadditive category, let $C_{\ast}$ be a chain
complex in $\cal C$, and let $K$ be a simplicial set. We will write
$K\otimes C_{\ast}=N_{\ast}\pr K\otimes C_{\ast}$, where:
\begin{enumerate}
\item $N_{\ast}\pr K$ denotes the normalized chain complex of the free
simplicial abelian group generated by $K$;
\item we regard $\Ch\pr{\cal C}$ as enriched over the symmetric monoidal
category $\Ch\pr{\bb Z}$ of chain complexes of abelian groups and
tensor products of chain complexes, as explained in \cite[\href{https://kerodon.net/tag/00NN}{Tag 00NN}]{kerodon};
and
\item $N_{\ast}\pr K\otimes C_{\ast}$ denotes the tensor of $N_{\ast}\pr K$
with $C_{\ast}$ with respect to the enrichment in (2). 
\end{enumerate}
The object $K\otimes C_{\ast}$ may not exist in general, but it often
does. For example, if $K$ is \textit{finite}, i.e., has only finitely
many nondegenerate simplices, and if $\cal C$ is additive, then $K\otimes C_{\ast}$
exists and is given by the same formula as in $\Ch\pr{\bb Z}$. We
also let $C^{K}_{\ast}=C^{N_{\ast}\pr K}_{\ast}$ denote the cotensor
of $C_{\ast}$ by $N_{\ast}\pr K$, provided that it exists.
\end{notation}

We can now state the main result of this section.
\begin{prop}
\label{prop:realizatoin_totalization}Let $\cal A$ be an abelian
category.
\begin{enumerate}
\item Suppose that $\cal A$ has countable coproducts.
\begin{enumerate}
\item Let $X$ be a semisimplicial object in $\Ch\pr{\cal A}$. There is
an isomorphism of chain complexes
\[
\Tot^{\oplus}\pr{M_{\ast}\pr X}\cong\norm X
\]
which is natural in $X$.
\item Let $X$ be a simplicial object in $\Ch\pr{\cal A}$. There are isomorphisms
of chain complexes
\[
\Tot^{\oplus}\pr{N_{\ast}\pr X}\cong\Tot^{\oplus}\pr{\overline{M}_{\ast}\pr X}\cong\abs X
\]
which is natural in $X$.
\item Let $X$ be a simplicial object in $\Ch\pr{\cal A}$. The map $\norm X\to\abs X$
is a chain homotopy equivalence. 
\end{enumerate}
\item Suppose that $\cal A$ has countable products.
\begin{enumerate}
\item Let $X$ be a semi-cosimplicial object in $\Ch\pr{\cal A}$. There
is an isomorphism of chain complexes
\[
\Tot^{\Pi}\pr{M_{\ast}\pr X}\cong\Tot^{{\rm fat}}\pr X
\]
which is natural in $X$.
\item Let $X$ be a cosimplicial object in $\Ch\pr{\cal A}$. There are
isomorphisms of chain complexes
\[
\Tot^{\Pi}\pr{N_{\ast}\pr X}\cong\Tot^{\Pi}\pr{\overline{M}_{\ast}\pr X}\cong\Tot\pr X
\]
natural in $X$.
\item Let $X$ be a cosimplicial object in $\Ch\pr{\cal A}$. The map $\Tot\pr X\to\Tot^{{\rm fat}}\pr X$
is a chain homotopy equivalence.
\end{enumerate}
\end{enumerate}
\end{prop}

\begin{rem}
\label{rmk:findim}The isomorphisms of Proposition \ref{prop:realizatoin_totalization}
are available whenever the coproduct or product appearing in the definition
of direct sum or direct product totalization exist. (This follows
from the proof of the proposition.) 

For example, let $\cal A$ be an abelian category. Then isomorphisms
of part (1) of Proposition \ref{prop:realizatoin_totalization} is
available for semi- and non-semi-simplicial objects in $\Ch_{\geq0}\pr{\cal A}$. 

As another example, let us say that a semi-simplicial object $X$
in $\cal A$ is \textbf{finite-dimensional} if there are only finitely
many integers $n$ such that $X_{n}\neq0$. Let us also say that a
simplicial object is \textbf{finite-dimensional} if its normalized
semi-simplicial object is finite-dimensional. For finite-dimensional
semi-simplicial objects and simplicial objects, part (1) of Proposition
\ref{prop:realizatoin_totalization} holds without assuming $\cal A$
has countable coproducts, with the same proof. Likewise, part (2)
of Proposition \ref{prop:realizatoin_totalization} holds without
assuming $\cal A$ has countable products if we restrict our attention
to finite-dimensional cosimplicial and semi-cosimplicial objects (i.e.,
cosimplicial and semi-cosimplicial objects that are finite-dimensional
in the opposite category).
\end{rem}

\begin{warning}
Let $\cal A$ be an abelian category with countable coproducts, and
let $X$ be a simplicial object in $\Ch\pr{\cal A}$. In general,
the isomorphisms provided by Proposition \ref{prop:realizatoin_totalization}
do \textit{not} make the diagram
\[\begin{tikzcd}
	{\operatorname{Tot}^\oplus(M_\ast(X))} & {\Vert X\Vert } \\
	{\operatorname{Tot}^\oplus(\overline{M}_\ast(X))} & {|X|}
	\arrow["\cong", from=1-1, to=1-2]
	\arrow[from=1-1, to=2-1]
	\arrow[from=1-2, to=2-2]
	\arrow["\cong"', from=2-1, to=2-2]
\end{tikzcd}\]commutative. Indeed, the proof of Proposition \ref{prop:realizatoin_totalization}
shows that composite
\[
\Delta^{1}\otimes D_{1}\pr X\to\norm X\to\abs X
\]
is non-zero as long as $D_{1}\pr X$ is non-zero, but the composite
\[
\Delta^{1}\otimes D_{1}\pr X\to\norm X\cong\Tot^{\oplus}\pr{M_{\ast}\pr X}\to\Tot^{\oplus}\pr{\overline{M}_{\ast}\pr X}
\]
is zero.
\end{warning}

The remainder of this subsection is devoted to the proof of Propositions
\ref{prop:realizatoin_totalization}. We start by recalling the following
classical result:
\begin{prop}
\label{prop:1.2.3.17}\cite[Proposition 1.2.3.17]{HA}, \cite[III, Theorem 2.4]{GJ99}
Let $X$ be a simplicial object in an abelian category. The composite
\[
N_{\ast}\pr X\xrightarrow{\phi}M_{\ast}\pr X\xrightarrow{\psi}\overline{M}_{\ast}\pr X
\]
is an isomorphism, and the maps $\phi$ and $\psi$ are chain homotopy
equivalences.
\end{prop}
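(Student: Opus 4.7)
The plan is to first establish a degreewise direct sum decomposition $M_n(X) = N_n(X) \oplus D_n(X)$, and then separately show that the subcomplex $D_\ast(X) \subseteq M_\ast(X)$ is acyclic. The decomposition immediately forces the composite $\psi \circ \phi$ to be a degreewise isomorphism, hence an isomorphism of chain complexes, once I have verified that $\phi$ and $\psi$ are chain maps in the first place. For $\phi$, this uses the simplicial identity $d_j d_0 = d_0 d_{j+1}$ (for $j \geq 1$) to see that $d_0$ carries $N_n(X)$ into $N_{n-1}(X)$, so that the alternating-sum differential of $M_\ast(X)$ restricts to $d_0$ on $N_\ast(X)$, matching the differential of the Moore complex of $X^{\mathrm{norm}}$. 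For $\psi$, I need to check that $D_\ast(X)$ is indeed a subcomplex, which follows from the identities for $d_j s_i$: the $j = i$ and $j = i+1$ contributions cancel in the alternating sum, and the remaining terms stay in the image of some degeneracy. With these in place, acyclicity of $D_\ast(X)$ together with the short exact sequence $0 \to D_\ast(X) \to M_\ast(X) \to \overline{M}_\ast(X) \to 0$ makes $\psi$ a quasi-isomorphism, and then $\phi$ is too since $\psi \circ \phi$ is an isomorphism.

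To build the decomposition, I would introduce intermediate subobjects $N_n^{(p)} = \bigcap_{i > p} \Ker\bigl(d_i : X_n \to X_{n-1}\bigr)$ interpolating between $N_n^{(0)} = N_n(X)$ and $N_n^{(n)} = X_n$. The identity $d_p s_{p-1} = \id$ makes $s_{p-1} d_p$ an idempotent endomorphism of $N_n^{(p)}$ (one uses $d_i s_{p-1} = s_{p-1} d_{i-1}$ for $i > p$ to see that the image still lies in $N_n^{(p)}$), yielding a splitting $N_n^{(p)} = N_n^{(p-1)} \oplus s_{p-1}\bigl(N_{n-1}^{(p-1)}\bigr)$. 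Iterating over $p$ gives $X_n = N_n(X) \oplus \sum_{i} s_i(X_{n-1}) = N_n(X) \oplus D_n(X)$.

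For the acyclicity of $D_\ast(X)$, I would filter it by the subcomplexes $D^{(p)}_\ast(X) = \sum_{i \leq p} s_i(X_{\ast - 1})$ and argue by induction on $p$, constructing an explicit contracting homotopy on each successive quotient $D^{(p)}_\ast(X)/D^{(p-1)}_\ast(X)$ from the degeneracy $s_p$ itself. The main obstacle will be managing the combinatorics of the simplicial identities in an element-free manner (since $\cal A$ is only assumed to be abelian): both the identification of the abstract iterated summand $\bigoplus_p s_{p-1}\bigl(N_{n-1}^{(p-1)}\bigr)$ with the concretely defined degenerate subcomplex $D_n(X)$, and the verification that the contracting homotopies assemble coherently across the filtration, require careful bookkeeping via subobjects and idempotents rather than elements.
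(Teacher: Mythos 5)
Your proposal is correct, but there is nothing in the paper to compare it against: the paper does not prove this proposition at all, stating it as classical and citing \cite[Proposition 1.2.3.17]{HA}. What you have reconstructed is, in outline, exactly the standard Dold--Kan argument that the citation points to. Your verifications are the right ones at every step: $d_jd_0=d_0d_{j+1}$ (for $j\geq1$) makes $\phi$ a chain map whose source has differential $d_0$; the cancellation of the $j=i$ and $j=i+1$ terms in $\sum_j(-1)^jd_js_i$ shows $D_\ast\pr X$ is a subcomplex; the interpolating kernels $N_n^{\pr p}=\bigcap_{i>p}\Ker\pr{d_i}$ with the idempotents $s_{p-1}d_p$ (note you also need $d_p\pr{N_n^{\pr p}}\subseteq N_{n-1}^{\pr{p-1}}$, which follows from $d_jd_p=d_pd_{j+1}$ for $j\geq p$) give the splitting $N_n^{\pr p}=N_n^{\pr{p-1}}\oplus s_{p-1}\pr{N_{n-1}^{\pr{p-1}}}$; and on the quotient $D_\ast^{\pr p}/D_\ast^{\pr{p-1}}$ of your filtration $D_\ast^{\pr p}=\sum_{i\leq p}s_i\pr{X_{\ast-1}}$, the assignment $[s_px]\mapsto\pr{-1}^p[s_ps_px]$ is a contracting homotopy, well defined because $s_ps_i=s_is_{p-1}$ for $i<p$ carries anything with $s_px\in D^{\pr{p-1}}$ to $D^{\pr{p-1}}$ again. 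The two bookkeeping items you flag do deserve the care you promise them, and both go through. For the identification of $C_n=\bigoplus_ps_{p-1}\pr{N_{n-1}^{\pr{p-1}}}$ with $D_n\pr X$, the clean route is the inclusion $C_n\subseteq D_n\pr X$ together with a finite downward induction showing $s_i\pr{X_{n-1}}\subseteq C_n$, using $s_is_{q-1}=s_qs_i$ for $i\leq q-1$ and $s_i\pr{N_{n-2}^{\pr{q-1}}}\subseteq N_{n-1}^{\pr q}$; alternatively, modularity of the subobject lattice reduces it to $D_n\pr X\cap N_n\pr X=0$. And the element-free worry is not a genuine obstacle: every verification concerns finitely many composites, images, and kernels of the structure maps, so you may either argue with Mac Lane's membership calculus \cite[Chapter VIII, $\S$4]{CWM} --- the same device this paper uses via \cite[Chapter VIII, Theorem 3]{CWM} in the proof of Proposition \ref{prop:totalization_homotopical} --- or invoke a Freyd--Mitchell embedding of a small abelian subcategory containing the relevant objects. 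With those two items written out, your outline is a complete and correct proof of the statement the paper outsources to \cite{HA}.
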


We need two more lemmas.
\begin{lem}
\label{lem:tot_htpy}Let $\cal C$ be an additive category with countable
coproducts. The functor
\[
\Tot^{\oplus}:\Ch\pr{\Ch\pr{\cal C}}\to\Ch\pr{\cal C}
\]
preserves chain homotopy equivalences. 
\end{lem}

\begin{proof}
Observe that if $G_{\ast}\in\Ch\pr{\bb Z}$ is a chain complex of
free abelian groups of countable ranks and $X\in\Ch\pr{\Ch\pr{\cal A}}$
is a double complex, there is a natural (in $G$ and $X$) isomorphism
\[
G_{\ast}\otimes\Tot^{\oplus}\pr{X_{\ast,\ast}}\cong\Tot^{\oplus}\pr{G_{\ast}\otimes X_{\ast,\ast}}.
\]
Specializing to the case where $G_{\ast}=N_{\ast}\pr{\Delta^{1}}$,
we deduce that $\Tot^{\oplus}$ preserves chain homotopy equivalences. 
\end{proof}

\begin{notation}
Let $\cal C$ be an additive category, and let $C_{\ast}$ be a chain
complex in $\cal C$. For each integer $n$, we let $\sk_{n}\pr{C_{\ast}}$
denote the \textbf{$n$-skeleton} of $C_{\ast}$, which is the subcomplex
of $C_{\ast}$ defined by
\[
\sk_{n}\pr{C_{\ast}}_{k}=\begin{cases}
0 & \text{if }k>n,\\
C_{k} & \text{otherwise.}
\end{cases}
\]
\end{notation}

\begin{lem}
\label{lem:Tot}Let $\cal C$ be an additive category. Let $X\in\Ch\pr{\cal C}^{\Del^{\op}_{\inj}}$
be a semi-simplicial object in $\Ch\pr{\cal C}$. For each $n\geq0$,
the square 
\[\begin{tikzcd}
	{\partial\Delta^n\otimes X_{n,\ast}} & {\operatorname{Tot}^\oplus(\operatorname{sk}_{n-1}(M_\ast(X)))} \\
	{\Delta^n\otimes X_{n,\ast}} & {\operatorname{Tot}^\oplus(\operatorname{sk}_{n}(M_\ast(X)))}
	\arrow[from=1-1, to=1-2]
	\arrow[from=1-1, to=2-1]
	\arrow[from=1-2, to=2-2]
	\arrow[from=2-1, to=2-2]
\end{tikzcd}\]is a pushout in $\Ch\pr{\cal C}$, where the bottom horizontal map
is induced by the maps
\[
\bigoplus_{\sigma\in\Delta^{n}_{k}\,\mathrm{nondegenerate}}X_{nl}\xrightarrow{\pr{\sigma^{*}}_{\sigma}}X_{kl}.
\]
\end{lem}

\begin{proof}
This follows by inspection.
\end{proof}

We now arrive at the proof of Proposition \ref{prop:realizatoin_totalization}.
\begin{proof}
[Proof of Proposition \ref{prop:realizatoin_totalization}]We will
prove assertion (1); assertion (2) is dual. 

For part (a), for each $n\geq0$, set $\norm X^{n}=\int^{[k]\in\Del_{\inj,\leq n}}\Delta^{k}\otimes X_{k}$.
Applying Lemma \ref{lem:Tot} iteratively, we obtain an isomorphism
\[
\Tot^{\oplus}\pr{\sk_{n}\pr{M_{\ast}\pr X}}\cong\norm X^{n}.
\]
By taking the colimit as $n$ tends to $\infty$, we obtain the desired
isomorphism
\[
\Tot^{\oplus}\pr{M_{\ast}\pr X}\cong\norm X.
\]

Next, for part (b), let $X$ be a simplicial object in $\Ch\pr{\cal A}$.
We claim that the composite
\[
\theta:\norm{X^{{\rm norm}}}\to\norm X\to\abs X
\]
is an isomorphism. Combining this with part (a) and Proposition \ref{prop:1.2.3.17},
we obtain the desired isomorphism $\Tot^{\oplus}\pr{M_{\ast}\pr X}\cong\Tot^{\oplus}\pr{N_{\ast}\pr X}\cong\norm{X^{{\rm norm}}}\cong\abs X$. 

For each $n\geq0$, set $\abs X^{n}=\int^{[k]\in\Del_{\leq n}}\Delta^{k}\otimes X_{k}$.
To show that $\theta$ is an isomorphism, it suffices to show that
the map $\theta_{n}:\norm{X^{{\rm norm}}}^{n}\to\abs X^{n}$ is an
isomorphism for all $n$. We prove this by induction on $n$. If $n=0$,
the claim is trivial because $X^{{\rm norm}}_{0}=X_{0}=\overline{M}_{0}\pr X$.
For the inductive step, suppose that $\theta_{n-1}$ is an isomorphism.
We must show that $\theta_{n}$ is an isomorphism. For this, consider
the diagram
\[\begin{tikzcd}
	& {(\partial\Delta^n\otimes X_n)\amalg_{\partial\Delta^n\otimes D_n(X)}(\Delta^n\otimes D_n(X))} & {|X|^{n-1}} \\
	& {\Delta^n\otimes X_n} & {|X|^n}. \\
	{\partial\Delta^n\otimes X^{\mathrm{norm}}_n} & {\Vert X^{\mathrm{norm}}\Vert^{n-1}} \\
	{\Delta^n\otimes X^{\mathrm{norm}}_n} & {\Vert X^{\mathrm{norm}}\Vert^{n}}
	\arrow[from=1-2, to=1-3]
	\arrow[from=1-2, to=2-2]
	\arrow[from=1-3, to=2-3]
	\arrow[from=2-2, to=2-3]
	\arrow[from=3-1, to=1-2]
	\arrow[from=3-1, to=3-2]
	\arrow[from=3-1, to=4-1]
	\arrow[from=3-2, to=1-3]
	\arrow[from=3-2, to=4-2]
	\arrow[from=4-1, to=2-2]
	\arrow[from=4-1, to=4-2]
	\arrow[from=4-2, to=2-3]
\end{tikzcd}\]The front and the back faces are pushouts. The left-hand face is also
a pushout because $X_{n}$ is a direct sum of $X^{{\rm norm}}_{n}$
and $D_{n}\pr X$ by Proposition \ref{prop:1.2.3.17}. Hence the right-hand
face is also a pushout. It then follows from the induction hypothesis
that $\theta_{n}$ is an isomorphism, as desired.

For part (c), let again $X$ be a simplicial object in $\Ch\pr{\cal A}$.
We wish to show that the map $\phi:\norm X\to\abs X$ is a chain homotopy
equivalence. We have just seen that the composite $\norm{X^{{\rm norm}}}\xrightarrow{\phi'}\norm X\xrightarrow{\phi}\abs X$
is an isomorphism, so it suffices to show that the map $\phi'$ is
a chain homotopy equivalence. By part (1), we can identify this map
with $\Tot^{\oplus}\pr{N_{\ast}\pr X}\to\Tot^{\oplus}\pr{M_{\ast}\pr X}$,
which is a chain homotopy equivalence by Proposition \ref{prop:1.2.3.17}
and Lemma \ref{lem:tot_htpy}. The proof is now complete.
\end{proof}

\subsection{\label{subsec:twomorelemmas}Two More Lemmas}

In this subsection, we prove two more lemmas on geometric realization
and totalization, which we use in Subsection \ref{subsec:proof}.

Here are the lemmas we wish to prove:
\begin{lem}
\label{lem:extra_deg_chain}Let $\cal A$ be an abelian category,
and let $X\to X_{-1}$ be an augmented simplicial object $X\to X_{-1}$
in $\Ch\pr{\cal A}$ admitting extra degeneracies. If either $\cal A$
has countable coproducts or $X$ is finite-dimensional, the map
\[
\abs X\to X_{-1}
\]
is a chain homotopy equivalence.
\end{lem}

\begin{lem}
\label{lem:Tot_column_wise_qis}Let $\cal A$ be an abelian category.
\begin{enumerate}
\item Let $f:X\to Y$ be a morphism in $\Ch\pr{\cal A}^{\Del^{\op}}$ such
that, for each $n\geq0$, the map $f_{n}:X_{n,\ast}\to Y_{n,\ast}$
is a quasi-isomorphism. If either $\cal A$ is $\ABF_{\Omega}$ or
$X$ and $Y$ are finite-dimensional, the map $\abs f:\abs X\to\abs Y$
is a quasi-isomorphism.
\item Let $f:X\to Y$ be a morphism in $\Ch\pr{\cal A}^{\Del}$ such that,
for each $n\geq0$, the map $f_{n}:X_{n,\ast}\to Y_{n,\ast}$ is a
quasi-isomorphism. If either $\cal A$ is $\ABF^{*}_{\Omega}$ or
$X$ and $Y$ are finite-dimensional, the map $\Tot\pr f:\Tot\pr X\to\Tot\pr Y$
is a quasi-isomorphism.
\end{enumerate}
\end{lem}

\begin{rem}
Let $\cal A$ be an $\ABF_{\Omega}$ abelian category. Lemma \ref{lem:Tot_column_wise_qis}
implies that the functor $\abs -:\Ch\pr{\cal A}^{\Del^{\op}}\to\Ch\pr{\cal A}$
descends to a homotopy colimit functor in the sense of Definition
\ref{def:hocolim}. Indeed, it is a relative functor, and its right
adjoint $\Sing:\Ch\pr{\cal A}\to\Ch\pr{\cal A}^{\Del^{\op}}$ of $\abs -$,
given by $C\mapsto C^{\Delta^{\bullet}}$, is also relative by Lemma
\ref{lem:kunneth} below. Proposition \ref{prop:realizatoin_totalization}
then gives us an adjunction $\Ho\pr{\abs -}:\Ho\pr{\Ch\pr{\cal A}^{\Del^{\op}}}\adj\Ho\pr{\Ch\pr{\cal A}}:\Ho\pr{\Sing}$.
But $\Ho\pr{\Sing}$ is naturally isomorphic to $\Ho\pr{\delta}$,
so $\Ho\pr{\abs -}$ is a homotopy colimit functor. A similar remark
applies to fat geometric realization; details are left to the readers.
\end{rem}

We can give a proof of Lemma \ref{lem:extra_deg_chain} right away.
\begin{proof}
[Proof of Lemma \ref{lem:extra_deg_chain}]Let $\delta\pr{X_{-1}}\in\Ch\pr{\cal A}^{\Del^{\op}}$
denote the constant simplicial object at $X_{-1}$, and let $p:X\to\delta\pr{X_{-1}}$
denote the map of simplicial object induced by the augmentation. We
wish to show that the map $\abs p$ is a chain homotopy equivalence.
According to Proposition \ref{prop:realizatoin_totalization} and
Remark \ref{rmk:findim}, we can identify this map with $\Tot^{\oplus}\pr{\overline{M}_{\ast}\pr p}$.
Therefore, by Lemma \ref{lem:tot_htpy}, it suffices to show that
the map $\overline{M}_{\ast}\pr p$ is a chain homotopy equivalence
(of chain complexes in $\Ch\pr{\cal A}$). Since simplicial homotopies
of simplicial chain complexes give rise to chain homotopies of Moore
complexes \cite[III, Lemma 2.15]{GJ99}, this follows from Proposition
\ref{prop:1.2.3.17}.
\end{proof}

The proof of Lemma \ref{lem:Tot_column_wise_qis} requires a few preliminaries.
\begin{defn}
\label{def:lim1}Let $\cal A$ be an abelian category with countable
products. Given a tower
\[
\cdots\xrightarrow{f_{2}}A_{2}\xrightarrow{f_{1}}A_{1}\xrightarrow{f_{0}}A_{0}
\]
of objects in $\cal A$, we define $\lim^{1}_{n}A_{n}\in\cal A$ to
be the cokernel of the map
\[
F:\prod_{n\geq0}A_{n}\to\prod_{n\geq0}A_{n}
\]
defined by the requirement that for each morphism $\pr{a_{n}}_{n\geq0}:X\to\prod_{n\geq0}A_{n}$,
we have $F\circ\pr{a_{n}}_{n\geq0}=\pr{f_{n}\pr{a_{n+1}}-a_{n}}_{n\geq0}$.
We define $\colim^{1}_{n}$ dually in abelian categories with countable
coproducts.
\end{defn}

\begin{example}
\label{exa:splitepis}In the situation of Definition \ref{def:lim1},
suppose that each $f_{n}$ has a section $s_{n}:A_{n}\to A_{n+1}$.
Then $\lim^{1}_{n}A_{n}=0$. In fact, $F$ is a split epimorphism.
To see this, define inductively a map $u_{m}:\prod_{n\ge0}A_{n}\to A_{m}$
by $u_{0}=0$ and $u_{m+1}=s_{m}\circ\pr{u_{m}+\opn{pr}_{m}}$, where
$\opn{pr}_{m}:\prod_{n\geq0}A_{n}\to A_{m}$ denotes the $m$th projection.
Then $\pr{u_{n}}_{n\geq0}:\prod_{n\ge0}A_{n}\to\prod_{n\ge0}A_{n}$
is a splitting of $F$.

If we merely assume that each $f_{n}$ is an epimorphism, we might
not have $\lim^{1}_{n}A_{n}=0$, even if $\cal A$ is $\ABF^{*}$
\cite{Nee02}.
\end{example}

The following lemma appears as \cite[Theorem 3.5.8]{WeibelHA}. (The
$\ABF^{*}_{\Omega}$ property is necessary to ensure that we have
the long exact sequence involving $\lim$ and $\lim^{1}$, described
in \cite[Lemma 3.5.2]{WeibelHA}.)
\begin{lem}
\label{lem:3.5.8}Let $\cal A$ be an $\ABF^{*}_{\Omega}$ abelian
category, and let
\[
\cdots\xrightarrow{p_{1}}C_{1}\xrightarrow{p_{0}}C_{0}
\]
be a sequence of epimorphisms of chain complexes in $\cal A$. If
$\lim^{1}_{n}C_{n}=0$, then there is an exact sequence
\[
0\to\lim^{1}_{n}H_{\ast+1}\pr{C_{n}}\to H_{\ast}\pr{\lim_{n}C_{n}}\to\lim_{n}H_{\ast}\pr{C_{n}}\to0
\]
which is natural in the tower $\{C_{n}\}_{n\geq0}$.
\end{lem}

The following lemma is a generalization of the K\"unneth formula.
The proof is almost identical to that of the ordinary case, but we
record the proof for readers' convenience. 

\begin{lem}
\label{lem:kunneth}Let $\cal A$ be an abelian category, and let
$K$ be a finite simplicial set (i.e., has only finitely many nondegenerate
simplices) whose integral homology groups are all free. Then for every
chain complex $C_{\ast}$ in $\cal A$, there is an isomorphism
\[
\bigoplus_{k+l=n}H_{k}\pr K\otimes H_{l}\pr{C_{\ast}}\cong H_{n}\pr{K\otimes C_{\ast}}
\]
which is natural in $K$ and $C_{\ast}$.
\end{lem}

\begin{proof}
We will write $N_{\ast}=N_{\ast}\pr K$. For each integer $n$, set
$Z_{n}=\Ker\pr{N_{n}\to N_{n-1}}$ and $B_{n}=\Im\pr{N_{n+1}\to N_{n}}$.
We will regard $Z_{\ast}$ and $B_{\ast}$ as chain complexes with
trivial differentials. There is an exact sequence
\[
0\to Z_{\ast}\to N_{\ast}\to B_{\ast-1}\to0
\]
of chain complexes of abelian groups. Since $B_{\ast}$ is free in
each degree, this sequence splits in each degree. It follows that
the sequence
\[
0\to Z_{\ast}\otimes C_{\ast}\to N_{\ast}\otimes C_{\ast}\to B_{\ast-1}\otimes C_{\ast}\to0
\]
is also exact. We thus obtain a long exact sequence
\[
\cdots\to H_{n}\pr{Z_{\ast}\otimes C_{\ast}}\to H_{n}\pr{N_{\ast}\otimes C_{\ast}}\to H_{n-1}\pr{B_{\ast}\otimes C_{\ast}}\to H_{n-1}\pr{Z_{\ast}\otimes C_{\ast}}\to\cdots.
\]
Note that the connecting homomorphism $H_{n-1}\pr{B_{\ast}\otimes C_{\ast}}\to H_{n-1}\pr{Z_{\ast}\otimes C_{\ast}}$
is induced by the inclusion $B_{\ast}\otimes C_{\ast}\to Z_{\ast}\otimes C_{\ast}$.

Now since $H_{\ast}\pr K$ is free in each degree, the exact sequence
\[
0\to B_{\ast}\to Z_{\ast}\to H_{\ast}\pr K\to0
\]
of chain complexes (with trivial differentials) splits. So the sequence
\[
0\to H_{n}\pr{B_{\ast}\otimes C_{\ast}}\to H_{n}\pr{Z_{\ast}\otimes C_{\ast}}\to H_{n}\pr{H_{\ast}\pr K\otimes C_{\ast}}\to0
\]
is also split exact. Combining this with the long exact sequence above,
we obtain the desired isomorphism
\[
H_{n}\pr{K\otimes C_{\ast}}\cong H_{n}\pr{H_{\ast}\pr K\otimes C_{\ast}}=\bigoplus_{k+l=n}H_{k}\pr K\otimes H_{l}\pr{C_{\ast}}.
\]
\end{proof}

\begin{lem}
\label{lem:norm_qis}Let $\cal A$ be an abelian category, and let
$f:X\to Y$ be a map of simplicial objects in $\Ch\pr{\cal A}$. The
following conditions are equivalent:
\begin{enumerate}
\item For each $n\geq0$, the map $f_{n}:X_{n,\ast}\to Y_{n,\ast}$ is a
quasi-isomorphism.
\item For each $n\geq0$, the map $N_{n}\pr f:N_{n}\pr X\to N_{n}\pr Y$
is a quasi-isomorphism.
\end{enumerate}
\end{lem}

\begin{proof}
This follows from the Dold--Kan correspondence, which says that there
is a direct sum decomposition $X_{n}=\bigoplus_{[n]\epi[k]}N_{k}\pr X$,
where the index ranges over the surjective poset maps $[n]\to[k]$.
\end{proof}

We now come to the proof of Lemma \ref{lem:Tot_column_wise_qis}.
\begin{proof}
[Proof of Lemma \ref{lem:Tot_column_wise_qis}]We will prove part
(1); part (2) follows by a dual argument. By Proposition \ref{prop:realizatoin_totalization}
and Remark \ref{rmk:findim}, it suffices to show that the map $\Tot^{\oplus}\pr{N_{\ast}\pr f}$
is a quasi-isomorphism. For this, we prove the following:

\begin{enumerate}[label=(\alph*)]

\item For each $n\geq0$, the map $\Tot^{\oplus}\pr{\opn{sk}_{n}\pr{N_{\ast}\pr X}}\to\Tot^{\oplus}\pr{\sk_{n}\pr{N_{\ast}\pr Y}}$
is a quasi-isomorphism.

\item If $\cal A$ has countable coproducts, we have $\colim^{1}_{n}\Tot^{\oplus}\pr{\opn{sk}_{n}\pr{N_{\ast}\pr X}}=\colim^{1}_{n}\Tot^{\oplus}\pr{\opn{sk}_{n}\pr{N_{\ast}\pr Y}}=0$.

\end{enumerate}

If $X$ and $Y$ are finite-dimensional, then part (a) will prove
the claim. If $\cal A$ satisfies $\ABF_{\Omega}$, then Lemma \ref{lem:3.5.8}
and the five lemma show that $\Tot^{\oplus}\pr{N_{\ast}\pr f}$ is
a quasi-isomorphism, and we will be done.

For part (a), we observe that for each $k\geq0$, the map $N_{k}\pr X\to N_{k}\pr Y$
is a quasi-isomorphism by Lemma \ref{lem:norm_qis}. Thus, the claim
follows by induction, using the five lemma, the pushout square of
\ref{lem:Tot}, and the K\"unneth formula (Lemma \ref{lem:kunneth}).

For part (b), we will show that $\colim^{1}_{n}\Tot^{\oplus}\pr{\opn{sk}_{n}\pr{N_{\ast}\pr X}}=0$.
Replacing $X$ by $Y$ throughout, we obtain $\colim^{1}_{n}\Tot^{\oplus}\pr{\opn{sk}_{n}\pr{N_{\ast}\pr Y}}=0$.
For each integer $d\in\bb Z$, the map $\Tot^{\oplus}\pr{\opn{sk}_{n-1}\pr{N_{\ast}\pr X}}_{d}\to\Tot^{\oplus}\pr{\opn{sk}_{n}\pr{N_{\ast}\pr X}}_{d}$
is an inclusion of a direct summand, so it is a split monomorphism.
It follows from Example \ref{exa:splitepis} that $\colim^{1}_{n}\Tot^{\oplus}\pr{\opn{sk}_{n}\pr{N_{\ast}\pr X}}_{d}=0$,
and we are done.
\end{proof}

\section{\label{sec:main}Main Results}

The goal of this section is twofold: The first goal is to state and
prove our main results, of which there are two (Subsection \ref{subsec:proof}).
One of our main result (Theorem \ref{thm:main2_precise}) does not
use the language of model categories, but the other one (Theorem \ref{thm:main1_precise})
does. Our second goal is to show that many model categories on chain
complexes satisfy the hypothesis of the theorem (Subsection \ref{subsec:compat}).

\subsection{\label{subsec:proof}Proofs of the Main Theorems}

In this subsection, we prove the main theorems of this paper (Theorems
\ref{thm:main1} and \ref{thm:main2}). Since we stated these theorems
somewhat vaguely in the introduction, we start by giving precise statements
of these theorems.
\begin{defn}
\label{def:compatiblewithssets}Let $\cal A$ be a bicomplete abelian
category, and let $\mu$ be a model structure on $\Ch\pr{\cal A}$.
We say that $\mu$ is \textbf{simplicially admissible} if $\otimes:\SS\times\Ch\pr{\cal A}_{\mu}\to\Ch\pr{\cal A}_{\mu}$
is a left Quillen bifunctor.
\end{defn}

\begin{rem}
Let $\cal A$ be a bicomplete abelian category, and let $\mu$ be
a simplicially admissible model structure on $\Ch\pr{\cal A}$. Then
the bifunctor $\otimes$ endows $\Ch\pr{\cal A}$ with an excellent
weak simplicial framing, and the bifunctor $\SS^{\op}\times\Ch\pr{\cal A}_{\mu}\to\Ch\pr{\cal A}_{\mu}$,
$\pr{K,C_{\ast}}\mapsto C^{K}_{\ast}$ endows $\Ch\pr{\cal A}$ with
an excellent weak cosimplicial framing.
\end{rem}

In Subsection \ref{subsec:compat}, we will give a sufficient condition
for a model structure on $\Ch\pr{\cal A}$ to be simplicially admissible.

Here are the precise statements of Theorems \ref{thm:main1} and Theorem
\ref{thm:main2}. 
\begin{thm}
\label{thm:main1_precise}Let $\cal A$ be a bicomplete abelian category.
Suppose $\Ch\pr{\cal A}$ is equipped with a simplicially admissible
model structure (Definition \ref{def:compatiblewithssets}). Then:
\begin{enumerate}
\item Let $Q\to\id_{\Ch\pr{\cal A}}$ be a cofibrant replacement in $\Ch\pr{\cal A}$.
The composite natural transformation
\[
B\pr{\ast,\cal I,Q\circ-}\to B\pr{\ast,\cal I,-}\to\colim_{\cal I}
\]
exhibits $B\pr{\ast,\cal I,Q\circ-}$ as an absolute left derived
functor of $\colim_{\cal I}$. 
\item The functor $B\pr{\ast,\cal I,-}:\Ch\pr{\cal A}^{\cal I}\to\Ch\pr{\cal A}$
carries weak equivalences of pointwise cofibrant diagrams to weak
equivalences of cofibrant objects.
\item Let $\id_{\Ch\pr{\cal A}}\to R$ be a fibrant replacement in $\Ch\pr{\cal A}$.
The composite natural transformation
\[
\lim_{\cal I}\to C\pr{\ast,\cal I,-}\to C\pr{\ast,\cal I,R\circ-}
\]
exhibits $C\pr{\ast,\cal I,R\circ-}$ as an absolute right derived
functor of $\lim_{\cal I}$.
\item The functor $C\pr{\ast,\cal I,-}:\Ch\pr{\cal A}^{\cal I}\to\Ch\pr{\cal A}$
carries weak equivalences of pointwise fibrant diagrams to weak equivalences
of fibrant objects.
\end{enumerate}
\end{thm}

In the statement of the next theorem, we regard $\Ch\pr{\cal A}$
as a relative category by declaring that its weak equivalences are
the quasi-isomorphisms.
\begin{thm}
\label{thm:main2_precise}Let $\cal A$ be an abelian category, and
let $\kappa$ be a regular cardinal. 
\begin{enumerate}
\item Suppose that $\cal A$ has $\kappa$-small colimits. The following
conditions are equivalent:
\begin{enumerate}
\item $\cal A$ is an $\ABF_{\kappa}$-abelian category.
\item For every $\kappa$-small category $\cal I$, the natural transformation
\[
B\pr{\ast,\cal I,-}\to\colim_{\cal I}
\]
exhibits $B\pr{\ast,\cal I,-}$ as an absolute left derived functor
of $\colim_{\cal I}:\Ch\pr{\cal A}^{\cal I}\to\Ch\pr{\cal A}$.
\end{enumerate}
\item Suppose that $\cal A$ has $\kappa$-small limits. The following conditions
are equivalent:
\begin{enumerate}
\item $\cal A$ is an $\ABF^{*}_{\kappa}$-abelian category.
\item For every $\kappa$-small category $\cal I$, the natural transformation
\[
\lim_{\cal I}\to C\pr{\ast,\cal I,-}
\]
exhibits $C\pr{\ast,\cal I,-}$ as an absolute right derived functor
of $\lim_{\cal I}:\Ch\pr{\cal A}^{\cal I}\to\Ch\pr{\cal A}$.
\end{enumerate}
\end{enumerate}
\end{thm}

The following are the proofs of Theorems \ref{thm:main1_precise}
and \ref{thm:main2_precise}.
\begin{proof}
[Proof of Theorem \ref{thm:main1_precise}]Assertion (1) is a consequence
of Theorem \ref{thm:BK}. Assertion (2) is a consequence of Lemma
\ref{lem:reedy_end}. The rest follows by a dual argument.
\end{proof}

\begin{proof}
[Proof of Theorem \ref{thm:main2_precise}]We will prove (1); part
(2) follows by a dual argument. For (b)$\implies$(a), we prove the
contrapositive. Suppose that $\cal A$ is not $\ABF_{\kappa}$. Find
a set $I$ of cardinality less than $\kappa$ and a collection of
monomorphisms $\{f_{i}:A_{i}\to B_{i}\}_{i\in I}$ such that $\bigoplus_{i}f_{i}$
is not monic. For each $i\in I$, let $\phi_{i}$ denote the morphism
in $\Ch\pr{\cal A}$ depicted as
\[\begin{tikzcd}
	\cdots & 0 & {A_i} & {B_i} & 0 & \cdots \\
	\cdots & 0 & 0 & {B_i/A_i} & 0 & \cdots.
	\arrow[from=1-1, to=1-2]
	\arrow[from=1-2, to=1-3]
	\arrow[from=1-2, to=2-2]
	\arrow[from=1-3, to=1-4]
	\arrow[from=1-3, to=2-3]
	\arrow[from=1-4, to=1-5]
	\arrow[from=1-4, to=2-4]
	\arrow[from=1-5, to=1-6]
	\arrow[from=1-5, to=2-5]
	\arrow[from=2-1, to=2-2]
	\arrow[from=2-2, to=2-3]
	\arrow[from=2-3, to=2-4]
	\arrow[from=2-4, to=2-5]
	\arrow[from=2-5, to=2-6]
\end{tikzcd}\]Each $\phi_{i}$ is a quasi-isomorphism, but $H_{0}\pr{\bigoplus_{i\in I}\phi_{i}}$
is not an isomorphism. (The object $A_{i}$ is in the $0$th degree.)
So the functor $B\pr{\ast,I,-}\cong\bigoplus_{i\in I}\phi_{i}:\Ch\pr{\cal A}^{I}\to\Ch\pr{\cal A}$
is not a relative functor. In particular, it cannot be a left derived
functor of $\bigoplus_{i\in I}$.

Next, we prove (a)$\implies$(b). The proof will be very similar to
that of Theorem \ref{thm:BK}. Let $y:\cal I\to\Set^{\cal I^{\op}}$
denote the Yoneda embedding. We define a functor $B\pr{y,\cal I,-}:\Ch\pr{\cal A}^{\cal I}\to\Ch\pr{\cal A}^{\cal I}$
by
\[
B\pr{y,\cal I,F}\pr i=B\pr{y\pr i,\cal I,F}.
\]
We have a natural isomorphism of functors $\colim_{\cal I}B\pr{y,\cal I,-}\cong B\pr{\ast,\cal I,-}:\Ch\pr{\cal A}^{\cal I}\to\Ch\pr{\cal A}$.
Thus, by Theorem \ref{thm:2.2.8}, it suffices to show that the natural
transformation $q:B\pr{y,\cal I,-}\To\id_{\Ch\pr{\cal A}^{\cal I}}$
determines a left deformation of $\colim_{\cal I}$. In other words,
we must check the following:

\begin{enumerate}[label=(\roman*)]

\item For each $F\in\Ch\pr{\cal A}^{\cal I}$, the map $B\pr{y\pr i,\cal I,F}\to F\pr i$
is a quasi-isomorphism.

\item The functor $B\pr{\ast,\cal I,-}$ carries pointwise quasi-isomorphisms
in $\Ch\pr{\cal A}^{\cal I}$ to quasi-isomorphisms. 

\end{enumerate}

Assertion (i) is a consequence of Lemma \ref{lem:extra_deg_chain}.
For (ii), let $F\to G$ be a morphism in $\Ch\pr{\cal A}^{\cal I}$,
and suppose that for each $i\in I$, the map $F\pr i\to G\pr i$ is
a quasi-isomorphism. We wish to show that the map 
\[
B\pr{\ast,\cal I,F}\to B\pr{\ast,\cal I,G}
\]
is a quasi-isomorphism. By Lemma \ref{lem:Tot_column_wise_qis}, it
will suffice to show that, for each $n\geq0$, the map
\[
B_{n}\pr{\ast,\cal I,F}\to B_{n}\pr{\ast,\cal I,G}
\]
is a quasi-isomorphism. This follows from our assumptions that $\cal I$
is $\kappa$-small and $\cal A$ is $\ABF_{\kappa}$. The proof is
now complete.
\end{proof}

\subsection{\label{subsec:compat}Simplicially Admissible Model Structures on
$\protect\Ch\protect\pr{\protect\cal A}$}

The goal of this section is to show that many model structures on
chain complexes are simplicially admissible in the sense of Definition
\ref{def:compatiblewithssets} (Example \ref{exa:sad}). 

We start by recalling the projective model structure on nonnegative
chain complexes of abelian groups.
\begin{prop}
\cite[Theorem 7.2 and Proposition 7.19]{DS95}\label{prop:DwSp} The
category $\Ch_{\geq0}\pr{\bb Z}$ of non-negative chain complexes
admits a model structure, called the \textbf{projective model structure},
which has the following descriptions:
\begin{enumerate}
\item The weak equivalences are the quasi-isomorphisms.
\item The fibrations are the maps that induce epimorphisms in the positive
degrees.
\item The cofibrations are the degreewise monomorphisms whose cokernel is
degreewise free.
\end{enumerate}
Moreover, this model structure is cofibrantly generated, with generating
cofibrations $\{0\to S^{0}\}\cup\{i_{n}:S^{n-1}\to D^{n}\mid n\geq1\}$
and generating trivial cofibrations $\{j_{n}:0\to D^{n}\mid n\geq1\}$.
Here $S^{n-1}$ and $D^{n}$ are defined by
\[
S^{n-1}=\pr{\cdots\to0\to0\to\underset{\text{degree }n-1}{\bb Z}\to0\to0\to\cdots}
\]
and 
\[
D^{n}=\pr{\cdots\to0\to0\to\underset{\text{degree }n}{\bb Z}\xrightarrow{\id}\underset{\text{degree }n-1}{\bb Z}\to0\to0\to\cdots}.
\]
\end{prop}

It is clear from the definitions that the functor $N_{\ast}\pr -:\SS\to\Ch_{\geq0}\pr{\bb Z}_{\proj}$
is left Quillen. This implies the following:
\begin{prop}
\label{prop:tens->sad}Let $\cal A$ be a bicomplete abelian category,
and let $\mu$ be a model structure on $\Ch\pr{\cal A}$. If the tensor
bifunctor
\[
\otimes:\Ch_{\geq0}\pr{\bb Z}_{\proj}\times\Ch\pr{\cal A}_{\mu}\to\Ch\pr{\cal A}_{\mu}
\]
is a left Quillen bifunctor, then $\mu$ is simplicially admissible.
\end{prop}

Proposition \ref{prop:tens->sad} gives two practical criteria for
simplicial admissibility of model structures on chain complexes:
\begin{cor}
\label{cor:sufficient_condition}Let $\cal A$ be a bicomplete abelian
category, and let $\mu$ be a model structure on $\Ch\pr{\cal A}$
satisfying the following conditions:
\begin{enumerate}
\item The weak equivalences are the quasi-isomorphisms.
\item A morphism of $\Ch\pr{\cal A}$ is a cofibration if and only if it
is a monomorphism with cofibrant cokernel.
\item A chain complex $\pr{C_{\ast},\partial_{\ast}}\in\Ch\pr{\cal A}$
is cofibrant if and only if its suspension $\Sigma\pr{C_{\ast},\partial_{\ast}}=\pr{C_{\ast-1},-\partial_{\ast}}$
is cofibrant.
\end{enumerate}
Then $\mu$ is simplicially admissible.
\end{cor}

\begin{proof}
By Propositions \ref{prop:DwSp} and \ref{prop:tens->sad}, it suffices
to show that for each cofibration $f:X_{\ast}\to Y_{\ast}$ in $\Ch\pr{\cal A}$,
the following conditions hold:
\begin{itemize}
\item [(a)]The map $S^{0}\otimes f$ is a cofibration, which is a quasi-isomorphism
if $f$ is a quasi-isomorphism.
\item [(b)]For each $n\geq1$, the map 
\[
i_{n}\square f:\pr{S^{n-1}\otimes Y_{\ast}}\amalg_{S^{n-1}\otimes X_{\ast}}\pr{D^{n}\otimes X_{\ast}}\to D^{n}\otimes Y_{\ast}
\]
is a cofibration, which is a quasi-isomorphism if $f$ is a quasi-isomorphism.
\item [(c)]For each $n\geq0$, the map $D^{n}\otimes f$ is a quasi-isomorphism.
\end{itemize}
Part (a) is obvious, because $S^{0}\otimes f$ can be identified with
$f$. Part (b) follows from conditions (2) and (3), since $i_{n}\square f$
is a degreewise monomorphism and its cokernel is $\Sigma^{n}\pr{\Coker\pr f}$.
Part (c) is clear, because both $D^{n}\otimes X_{\ast}$ and $D^{n}\otimes Y_{\ast}$
are contractible chain complexes. The proof is now complete.
\end{proof}

\begin{cor}
\label{cor:sufficient_condition_2}Let $\cal A$ be a bicomplete abelian
category, and let $\mu$ be a model structure on $\Ch\pr{\cal A}$
whose class of weak equivalences is the class of quasi-isomorphisms.
Suppose that $\mu$ satisfies the following condition:
\begin{itemize}
\item [($\ast$)]There is a set $\cal G$ of objects of $\cal A$ such that
the class of cofibrations of $\Ch\pr{\cal A}$ is the smallest class
of morphisms containing $\{i_{m}\otimes X:S^{m-1}\otimes X\to D^{m}\otimes X\mid m\in\bb Z,\,X\in\cal G\}$
and which is stable under retracts, pushouts, and transfinite compositions.
\end{itemize}
Then $\mu$ is simplicially admissible.
\end{cor}

\begin{proof}
As in the proof of Corollary \ref{cor:sufficient_condition}, it suffices
to show that for each pair of integers $n,m$ and for each $X\in\cal G$,
the map
\[
i_{n}\square\pr{i_{m}\otimes X}:\pr{S^{n-1}\otimes D^{m}\otimes X}\amalg_{S^{n-1}\otimes S^{m-1}\otimes X}\pr{D^{n}\otimes S^{m-1}\otimes X}\to D^{n}\otimes D^{m}\otimes X
\]
is a cofibration in $\mu$. Using the description of cofibrations
in Proposition \ref{prop:DwSp}, we find that a degree shift of the
map $f:\pr{S^{n-1}\otimes D^{m}}\amalg_{S^{n-1}\otimes S^{m-1}}\pr{D^{n}\otimes S^{m-1}}\to D^{n}\otimes D^{m}$
is a cofibration in $\Ch_{\geq0}\pr{\cal A}_{\proj}$. It follows
that $f$ is a retract of a transfinite composition of pushouts of
maps in $\{i_{k}:S^{k-1}\to D^{k}\mid k\in\bb Z\}$. Condition ($\ast$)
then implies that $f\otimes X$ is a cofibration in $\mu$, and we
are done.
\end{proof}

We can now show that many model structures on chain complexes are
simplicially admissible.
\begin{example}
\label{exa:sad}Let $\cal A$ be a bicomplete abelian category.
\begin{enumerate}
\item Suppose that $\Ch\pr{\cal A}$ admits the \textbf{injective model
structure}, i.e., a model structure whose cofibrations are the monomorphisms
and weak equivalences are the quasi-isomorphisms. (This model structure
exists if $\cal A$ is a Grothendieck abelian category \cite[Proposition 1.3.5.3]{HA}.)
Corollary \ref{cor:sufficient_condition} says that the injective
model structure is simplicially admissible.
\item Suppose that $\Ch\pr{\cal A}$ admits the \textbf{projective model
structure}, i.e., a model structure in which fibrations are the epimorphisms
and weak equivalences are the quasi-isomorphisms. (For instance, this
happens if $\cal A$ is the category of left $R$-modules over a ring
\cite[Theorem 2.3.11]{Hovey}.) A dual argument to (1) shows that
the projecitve model structure is simplicially admissible.
\item Let $\cal A$ be a bicomplete abelian category. Model structures on
$\Ch\pr{\cal A}$ constructed by using Hovey and Gillespie's work
\cite[Theorem 7.9]{Hovey_Cotorsion} satisfy the hypotheses of Corollary
\ref{cor:sufficient_condition}, so they are simplicially admissible.
\item Let $\cal A$ be a Grothendieck abelian category. Model structures
on $\Ch\pr{\cal A}$ constructed by using Cisinski and D\'{e}glise's
work \cite[Theorem 2.5]{CD09} satisfy the hypothesis of Corollary
\ref{cor:sufficient_condition_2}, so they are simplicially admissible.
\end{enumerate}
\end{example}

\subsection*{Acknowledgment}

I am grateful to an anonymous referee for substantially improving
this paper in numerous ways. They suggested a link between this work
and framing on model categories, pointed out an error in a proof of
a proposition in an earlier draft, offered a fix for this error, and
suggested simpler proofs of some results. I also thank Timothy Porter
and Birgit Richter for informing me of related work. During the revision,
I was supported by JSPS KAKENHI Grant Number 24KJ1443.

\providecommand{\bysame}{\leavevmode\hbox to3em{\hrulefill}\thinspace}
\providecommand{\MR}{\relax\ifhmode\unskip\space\fi MR }
\providecommand{\MRhref}[2]{%
  \href{http://www.ams.org/mathscinet-getitem?mr=#1}{#2}
}
\providecommand{\href}[2]{#2}

\end{document}